\newtheorem{theorem}{Theorem}
\newtheorem{question}{Question}
\newtheorem{lemma}[theorem]{Lemma}
\newtheorem{corollary}[theorem]{Corollary}
\newtheorem{conjecture}[theorem]{Conjecture}
\newtheorem{definition}[theorem]{Definition}
\newcommand{\ran}{\text{ran}}
\newcommand{\TT}{\mathcal{T}}
\newcommand{\Z}{\mathbb{Z}}
\newcommand{\GG}{\mathcal{G}}
\newcommand{\CC}{\mathcal{C}}
\newcommand{\BB}{\mathcal{B}}
\newcommand{\Ss}{\mathcal{S}}
\newcommand{\upharp}{\upharpoonright}
\newcommand{\N}{\mathbb{N}}
\begin{document}
\title{Locally finite trees and the topological minor relation}
\author{ Jorge Bruno and Paul J. Szeptycki}
\maketitle

\begin{abstract} A well-known theorem of Nash-Williams shows that the collection of locally finite trees under the topological minor relation results in a BQO. Set theoretically, two very natural questions arise:

\begin{enumerate}
\item What is the number $\lambda$ of topological types of locally finite trees?
\item What are the possible sizes of an equivalence class of locally finite trees?
\end{enumerate}
For (1), clearly, $\omega \leq \lambda \leq \mathfrak{c}$ and Matthiesen refined it to $\omega_1 \leq \lambda \leq \mathfrak{c}$. Thus, this question becomes non-trivial when the Continuum Hypothesis is not assumed. In this paper we address both questions by showing that - entirely within ZFC - for a large collection of locally finite trees that includes those with countably many rays: the answer for (1) is $\lambda = \omega_1$, and that for (2) the size of an equivalence class can only be either $1$ or $\mathfrak{c}$.

\end{abstract}

\section{Introduction.} We consider the family of locally finite trees with respect to the topological minor relation where $T\leq^{\sharp} S$ if some subdivision of the tree $T$ embeds as a subgraph of $S$. If two trees are mutually embedded in this way then they are topologically equivalent. 
The topological minor relation is a quasi-order which partially orders the induced equivalence classes. In particular, we recall the result of Nash-Williams that the topological minor relation defines a better-quasi ordering of all trees. Hence, chains in $({\mathcal T},\leq^{\sharp})$ are well-ordered. Matthiesen answered in \cite{M} a question of van der Holst by showing that there are uncountably many topological types of locally finite trees. The first author answered a question from \cite{M} by giving an explicit construction of $\omega_1$ topological types of locally finite trees \cite{B}. This paper addresses some natural related questions concerning the size of equivalence classes and the number of equivalence classes.
 
Let ${\mathcal T}$ denote the set of all countable locally finite rooted trees. Let ${\mathcal G}$ denote the family of all finite rooted trees.  For a rooted tree $T$, the natural order on $T$ that $s\leq t$ if the unique minimal path from the root to $t$ passes through $s$ will be denoted by $\leq$ or $\leq_T$. We refer the reader to \cite{D} for all necessary graph-theoretic background. There are two basic questions addressed here. The first question concerns the number of equivalence classes:

\begin{question} What is the supremum of the lengths of chains in $({\mathcal T},\leq^{\sharp})$? In particular, can there be any strictly increasing well-ordered subsets of $({\mathcal T},\leq^{\sharp})$ of length $\omega_2$ or longer (of course consistent with the negation of CH)?
\end{question}

We are interested in the set of equivalence classes of locally finite trees with respect to the equivalence relation induced by the topological minor relation. So if all chains are of length strictly less than $\omega_2$, then by a result of Kurepa we have a positive answer to the following question: 
\begin{question} Are there are exactly $\omega_1$ equivalence classes of locally finite trees wrt topological equivalence
\end{question}

\noindent {\bf REMARKS.}

The first author gave an explicit construction of a well-ordered strictly increasing chain of length $\omega_1$. This construction produces a sequence of binary trees $T_\alpha$ with countably many branches naturally lexicographically ordered  in type $\alpha$. It is not difficult to extend this chain of trees to obtain  chain of length $\omega_1+\omega+1$ by adding the full n-ary trees on topped off by the maximal locally finite tree (with nth level n-splitting). 

 The first part of this paper is devoted to showing that if we restrict to locally finite trees with countably many branches, then indeed $\omega_1$ is the supremum of lengths of such trees.

In addition, we consider chains involving trees with uncountably many branches and conjecture that for every $\alpha<\omega_2$ there is a chain of length $\alpha$ 

The second half of the paper is devoted to the question about the sizes of the equivalence classes. The Tree Alternative Conjecture concerns the sizes of equivalence classes with respect to the mutual embeddability relation. Indeed, it conjectures that the number of  trees mutually embeddable with a given tree T is either 1 or infinite. We consider a similar question for the relation of the mutual topological minor relation. For a locally finite tree $T$, let $[T]$ denote the equivalence class of topological equivalence and we ask

\begin{question} For a locally finite tree, $T$, is $|[T]|$ either $1$ or ${\mathfrak c}$?
\end{question} 

Note that for the relation of mutually embeddability, there are even countable trees where the number of equivalent trees may by countable or uncountable. 

\vskip 13pt
\section{Lengths of chains}

\vskip 6pt
\noindent Let $R$ be the graph on $\omega$ determined by adding one edge between successive natural numbers (i.e., $n$ is connected to $n-1$ and $n+1$ by an edge). We will refer to this tree as the {\bf ray} or the {\bf single infinite branch}.

Consider ${\mathcal G}^\omega$ the set of all functions $f:\omega\rightarrow {\mathcal G}$. For each such function, let $T_f$ be the tree obtained by attaching a copy of $f(n)$ to $R$ by connecting the root of $f(n)$ to $n$ via an edge. 
  
\begin{lemma} Given $f,g\in {\mathcal G}^\omega$ the following are equivalent:
\begin{enumerate}
\item $T_f \leq^{\sharp} T_g$
\item There is a strictly increasing sequence $(k_n:n\in\omega)$ such that for all $n$ $f(n)\leq^{\sharp} g(k_n)$. 
\end{enumerate}
\end{lemma}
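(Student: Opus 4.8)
The plan is to prove the two implications separately, with almost all of the work in $(1)\Rightarrow(2)$. The structural fact that drives everything is that each $T_g$ has exactly one end: since every $g(n)$ is finite, any ray in $T_g$ can use only finitely many vertices off the central ray $R$, so its tail must run monotonically along $R$. I would record this first as a short observation, together with its consequence that the image of any embedded copy of $R$ is a ray converging to the unique end of $T_g$ and hence eventually a subray of the spine.

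For $(2)\Rightarrow(1)$, the easy direction, I assume a strictly increasing $(k_n)$ together with, for each $n$, a topological embedding $\psi_n$ witnessing $f(n)\leq^{\sharp} g(k_n)$, and assemble a global embedding $\phi$ of a subdivision of $T_f$ into $T_g$. I send the spine vertex $n$ of $T_f$ to the spine vertex $k_n$ of $T_g$, send each spine edge $\{n,n+1\}$ to the spine path $k_n,k_n+1,\dots,k_{n+1}$ (available and internally disjoint exactly because the $k_n$ are strictly increasing), and use $\psi_n$ to place the copy of $f(n)$ inside the pendant tree $g(k_n)$, routing the connecting edge from $k_n$ down into $g(k_n)$ to the image of the root of $f(n)$. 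Internal disjointness is immediate, since distinct pendant trees $g(k_n)$ are vertex-disjoint and meet the spine only at their attachment points. The one point requiring care is that the copy of $f(n)$ be attached through the image of its root, so that the connecting edge and $\psi_n(f(n))$ overlap only there; this is guaranteed by taking $\psi_n$ to respect the rooted order.

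For $(1)\Rightarrow(2)$, the crux, I start from an embedding $\phi$ of a subdivision of $T_f$ into $T_g$ and track the images of the spine $0<1<2<\cdots$ of $T_f$. Since $\phi$ respects the tree order, $\phi(0)<\phi(1)<\cdots$ is an increasing sequence; it converges to the unique end, and because an increasing sequence cannot be trapped inside any finite pendant tree, every $\phi(n)$ must lie on the spine of $T_g$, say $\phi(n)=k_n$ with $k_0<k_1<\cdots$. The key local step is a confinement argument: at $n$ the two children are $n+1$ and the root of $f(n)$, and order preservation forces their images into the two upward directions at $k_n$, namely along the spine toward $k_n+1$ and into the pendant tree $g(k_n)$. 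The spine direction is consumed by the image of $n+1$, so the root of $f(n)$, and hence all of $f(n)$, is pushed into $g(k_n)$. Restricting $\phi$ to $f(n)$ then exhibits $f(n)\leq^{\sharp} g(k_n)$, and since the $k_n$ strictly increase this is exactly clause (2).

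The step I expect to be the main obstacle is making the confinement uniform over all $n$, including the initial behaviour before the spine image has ``settled'' onto the spine of $T_g$. Here the rooted order is precisely what yields a uniform statement: because the images of the spine are increasing, they never detour into a finite pendant tree and lie on the spine for \emph{every} $n$, not merely eventually, so each $f(n)$ is confined to a single $g(k_n)$. If one instead read $\leq^{\sharp}$ in a purely unrooted way, the image of $R$ would only eventually run along the spine, only a tail of the pendant trees would be controlled, and the finitely many initial $f(n)$ would require separate and delicate treatment; keeping track of the root is what makes the argument clean and forces the strict increase of $(k_n)$ on the nose.
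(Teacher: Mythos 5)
The paper states this lemma without proof, and your argument is correct and is surely the intended one: the easy gluing for $(2)\Rightarrow(1)$, and for $(1)\Rightarrow(2)$ the observation that an order-preserving image of the spine is an increasing sequence that cannot enter a finite pendant tree (else its infinite tail would be trapped there), so it lies entirely on the spine of $T_g$ and internal disjointness then confines each $f(n)$ to $g(k_n)$. Your closing remark is also exactly right and worth keeping: under a purely unrooted reading of $\leq^{\sharp}$ the equivalence genuinely fails for finitely many initial $n$ (e.g.\ a cherry attached at node $0$ can be embedded ``downward'' along the spine even when it embeds into no single $g(k)$), so the rooted, order-preserving convention is not a convenience but a necessity.
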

Item (2) in the above lemma suggests the following ordering on ${\mathcal G}^\omega$:

$f\leq^* g$ if there is $(k_n:n\in\omega)$ strictly increasing, such that for all $n,\  f(n)\leq^{\sharp} g(k_n).$ 

This ordering is transitive and reflexive but not antisymmetric. If we declare $f\equiv g$ if both $f\leq^* g$ and $g\leq^* f$ then obtain an equivalence relation. Then $\leq^*$ induces a partial ordering of the set $\{[g]:g\in {\mathcal G}^\omega\}$ of equivalence classes . We will be sloppy and refer to the ordering of $\leq^*$ as a partial ordering of ${\mathcal G}^\omega$. Note that since the topological minor relation is a wqo, by the equivalence given by the above lemma we have that this partial order is well founded and all antichains are finite. Our first goal is to prove 

\begin{theorem}\label{theoremfirst} The set $\{[g]:g\in {\mathcal G}^\omega\}$ of equivalence classes is countable. 
\end{theorem}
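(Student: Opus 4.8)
The plan is to attach to each $f\in\GG^\omega$ a \emph{normal form} lying in an explicitly countable set of sequences, and to show that $f$ is $\equiv$-equivalent to its normal form; the theorem then follows, since the number of $\equiv$-classes is at most the number of normal forms.

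Throughout I would work in $P:=\GG/\!\equiv_{\sharp}$, the quotient of the finite rooted trees by topological-minor equivalence. This is a countable partial order (there are only countably many finite rooted trees), and by Nash--Williams/Kruskal it is a well-partial-order (wpo). I will use two standard consequences of being a wpo: (a) every upward-closed subset is the up-closure of its finitely many minimal elements, so upward-closed sets correspond to finite antichains and there are only countably many of them---equivalently, $P$ has only countably many downward-closed subsets; and (b) every infinite sequence in $P$ has an infinite non-decreasing subsequence. For $f\in\GG^\omega$ set $D(f):=\{q\in P: q\leq^{\sharp} f(n)\text{ for infinitely many }n\}$, a downward-closed subset of $P$, nonempty since the one-point tree lies in it.

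The first real step is a structural lemma: \emph{all but finitely many terms of $f$ lie in $D(f)$.} Any value occurring infinitely often in $f$ is trivially in $D(f)$, so every value lying outside $D(f)$ occurs only finitely often. Next, the set $B(f)$ of values of $f$ lying outside $D(f)$ is finite: were it infinite, (b) would furnish a strictly increasing chain $b_1<^{\sharp}b_2<^{\sharp}\cdots$ in $B(f)$, and since the $b_j$ are pairwise distinct they occur at infinitely many distinct positions, whence $b_1\leq^{\sharp} f(m)$ for infinitely many $m$ and so $b_1\in D(f)$---contradicting $b_1\in B(f)$. Combining, only finitely many positions $n$ have $f(n)\notin D(f)$; let $p$ be the largest such position (or $p=-1$ if there is none).

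Now I build the normal form. Since $D:=D(f)$ is a countable downward-closed set, I fix a \emph{canonical tail} $\tau_D\in\GG^\omega$ enumerating the elements of $D$ with each appearing infinitely often; one checks $D(\tau_D)=D$. Set $\hat f:=(f(0),\dots,f(p))\frown\tau_D$. The workhorse is a greedy \emph{absorption} observation: if every term of a sequence $h$ lies in $D(g)$, then $h\leq^{*} g$, because each $h(n)$ then has infinitely many admissible targets in $g$ and one selects them in strictly increasing order. Using this I obtain $f\equiv\hat f$: to see $f\leq^{*}\hat f$, map the common finite prefix $f(0),\dots,f(p)$ identically and absorb the remaining terms of $f$ (all in $D$) into the tail $\tau_D$; to see $\hat f\leq^{*}f$, again fix the prefix identically and absorb $\tau_D$ into the terms of $f$ beyond position $p$, which still realize every element of $D$ infinitely often. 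Each $\hat f$ is determined by a finite sequence over the countable set $P$ together with a downward-closed subset of $P$, so by (a) there are only countably many normal forms, and hence only countably many $\equiv$-classes.

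I expect the main obstacle to be the structural lemma, and more precisely the realization that the finitely many ``exceptional'' terms must be handled as an \emph{ordered} block rather than by multiplicities. Indeed, the order of these rare large terms is genuinely rigid: if one chooses incomparable finite trees $b,c$ and a finite tree $z$ dominating neither, then $(b,c,z,z,\dots)$ and $(c,b,z,z,\dots)$ have identical domination counts yet are $\leq^{*}$-incomparable, so any invariant coarser than the ordered finite prefix would fail. This is exactly why the normal form retains the actual initial segment $(f(0),\dots,f(p))$ in order; the wpo extraction in (b) is what guarantees that this order-rigid part is finite, after which the greedy absorption disposes of everything in $D(f)$.
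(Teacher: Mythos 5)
Your proof is correct, but it takes a genuinely different route from the paper's. The paper does not count the classes directly: it invokes a theorem of Kurepa to reduce Theorem~\ref{theoremfirst} to the assertion that $(\GG^\omega,\leq^*)$ has no uncountable well-ordered chains, and proves that (Theorem~\ref{thm:paul}) by a stabilization argument along a putative $\omega_1$-chain $\{f_\alpha\}$: the set of values occurring ``cofinally in the range'' of the whole family stabilizes at some stage $\alpha$, every later $f_\beta$ eventually takes values in this set, and a greedy absorption lemma (Lemma~\ref{cfrangelemma2}) shows each $f_\beta$ is equivalent to a finite modification of $f_\alpha$, of which there are only countably many up to $\equiv$. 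You instead attach a normal form to each individual $f$ --- the ordered finite exceptional prefix followed by a canonical enumeration of the downward-closed set $D(f)$ --- and count normal forms outright. The combinatorial core is shared: your $D(f)$ is the single-function analogue of the paper's $\text{cfrange}(\mathcal F)$, your structural lemma (wqo extraction forces the exceptional part to be finite) plays the role of the paper's lemma that tails of the $f_\beta$ land in the cfrange, and your absorption step is essentially their Lemma~\ref{cfrangelemma2}. What your packaging buys is a stronger, cleaner statement proved with less machinery: for any countable wqo $P$, the quotient $P^\omega/\equiv$ is countable, with an explicit parametrization of the classes by pairs consisting of a finite sequence over $P$ and a downward-closed subset of $P$; this gives Theorem~\ref{theoremfirst} and the no-uncountable-chains conclusion simultaneously, with no appeal to Kurepa, and it directly generalizes the paper's own lemma on functions with bounded range (where the class of $g\leq^* f$ is likewise determined by a finite initial segment plus the range of the tail). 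What the paper's chain formulation buys is the form in which the result is actually reused later (the transfinite induction of Theorem~\ref{thm:countablecase} and Corollary~\ref{cor:paulgeneral}), though your counting argument, with finite antichains replaced by antichains of size $<\kappa$, adapts to those settings as well.
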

By a theorem of Kurepa - which states that for a wqo $P$, $|P|\geq \kappa$ if and only if there is a $\kappa$ well-ordered chain in $P$ - it suffices to prove that this partial order has no uncountable well-ordered subsets. We make a couple of observations about this particular partial order but then we will formulate an prove something more general. 


 We consider three types of functions, those with range that is dominating, those with bounded range, and those with unbounded but not dominating range. First note:

\begin{lemma} Given $f:\omega\rightarrow {\mathcal G}$, if the range of $f$ is dominating in ${\mathcal G}$ in the ordering $\leq^{\sharp}$, then $[f]$ is the maximum in ${\mathcal G}^\omega$. 
\end{lemma}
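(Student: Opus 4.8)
The plan is to verify the maximality of $[f]$ directly: I will show that $g \leq^* f$ for every $g \in \mathcal{G}^\omega$. Fix such a $g$. By the definition of $\leq^*$ it suffices to produce a strictly increasing sequence $(k_n : n \in \omega)$ with $g(n) \leq^{\sharp} f(k_n)$ for all $n$, and I would build this by recursion on $n$: having chosen $k_0 < \cdots < k_{n-1}$, pick any $k_n > k_{n-1}$ with $g(n) \leq^{\sharp} f(k_n)$. The entire content of the argument lies in guaranteeing that such a $k_n$ always exists, which reduces to the following claim.

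\smallskip
\noindent\textbf{Claim.} For every $G \in \mathcal{G}$, the set $\{m \in \omega : G \leq^{\sharp} f(m)\}$ is infinite.
\smallskip

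\noindent Granting the Claim, the recursion never stalls (at stage $n$ apply it to $G = g(n)$), and the resulting sequence witnesses $g \leq^* f$; since $g$ was arbitrary, $[f]$ is the maximum.

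To prove the Claim I would use two elementary features of $(\mathcal{G}, \leq^{\sharp})$. First, for each finite tree $H$ the down-set $D(H) = \{K \in \mathcal{G} : K \leq^{\sharp} H\}$ is finite, because a subdivision of $K$ embedding into the finite tree $H$ can have no more vertices than $H$, so $K$ itself has at most $|V(H)|$ vertices. Second, the up-set $\{H \in \mathcal{G} : G \leq^{\sharp} H\}$ of any $G$ is infinite, as witnessed for instance by the trees obtained from $G$ by attaching a path of unboundedly many vertices. Now suppose the Claim fails for some $G$, so that $\{m : G \leq^{\sharp} f(m)\}$ is contained in some finite initial segment $\{0, \ldots, N\}$. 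Any tree $H$ with $G \leq^{\sharp} H$ satisfies $H \leq^{\sharp} f(m)$ for some $m$, since the range of $f$ is dominating, and then transitivity of $\leq^{\sharp}$ gives $G \leq^{\sharp} f(m)$, forcing $m \leq N$. Hence the infinite up-set of $G$ is contained in $\bigcup_{m \leq N} D(f(m))$, a finite set, a contradiction.

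I expect the main obstacle to be precisely this last point: domination of the range of $f$ does not by itself hand us arbitrarily large indices $m$ realizing a given $G$, and the strict increase demanded by $\leq^*$ is exactly what forces us to upgrade ``some index'' to ``infinitely many indices.'' The mechanism performing this upgrade is the interplay between the finiteness of down-sets and the infinitude of up-sets, which converts a hypothetical finite bound on the useful indices into a finite up-set, contradicting the unboundedness of $\mathcal{G}$. I would therefore isolate the two structural observations above before assembling the recursion.
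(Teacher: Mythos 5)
Your proposal is correct. The paper states this lemma without proof, so there is no argument to compare against; your route is the natural one, and you correctly isolate the only non-trivial point, namely upgrading ``$G \leq^{\sharp} f(m)$ for some $m$'' to ``for infinitely many $m$,'' which is exactly what the strictly increasing witness sequence in the definition of $\leq^*$ requires. The fact doing the work --- that down-sets $\{K \in \mathcal{G} : K \leq^{\sharp} H\}$ of finite trees are finite --- is the same fact the paper uses implicitly in the very next lemma when it asserts that a bounded range must be finite, so your two structural observations fit the paper's conventions.
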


Indeed for any two functions, $f$ and $g$, if both have range that is dominating then $f$ is equivalent to $g$. 

\begin{lemma} If the range of $f$ is bounded by a single $T$ wrt the topological minor relation, then $\{[g]:g\leq^* f\}$ is countable.   
\end{lemma}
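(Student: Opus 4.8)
The plan is to reduce the statement to a purely combinatorial fact about sequences over a \emph{finite} poset. First I would observe that the hypothesis propagates downward: if $g \leq^* f$ via a strictly increasing sequence $(k_n : n\in\omega)$, then $g(n) \leq^{\sharp} f(k_n) \leq^{\sharp} T$ for every $n$, so every $g$ with $g \leq^* f$ has its range contained in $D_T := \{S \in \GG : S \leq^{\sharp} T\}$. Thus it suffices to bound the number of $\leq^*$-classes among functions whose range lies in $D_T$.

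The enabling finiteness observation is that $D_T$ has only finitely many $\leq^{\sharp}$-equivalence classes. Indeed, if $S \leq^{\sharp} T$ then some subdivision of $S$ is a subgraph of the finite tree $T$; since subdividing never decreases the number of edges, $|E(S)| \leq |E(T)|$, and there are only finitely many finite rooted trees with a bounded number of edges up to isomorphism. Let $P$ be the resulting finite poset of $\leq^{\sharp}$-classes in $D_T$, ordered by the relation induced by $\leq^{\sharp}$. Since whether $g \leq^* g'$ holds depends only on the $\leq^{\sharp}$-classes of the values $g(n), g'(n)$, the assignment $g \mapsto \bar g$, where $\bar g(n)$ is the class of $g(n)$, identifies our problem with counting $\leq^*$-classes of functions $\varphi : \omega \to P$. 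It therefore remains to prove: for a finite poset $P$, there are only countably many $\leq^*$-classes of functions $\omega \to P$.

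For this I would produce a countable family of canonical representatives. Given $\varphi : \omega \to P$, let $V(\varphi) \subseteq P$ be the set of values attained infinitely often. Because $P$ is finite, every value outside $V(\varphi)$ is attained only finitely often, so there is $N$ with $\varphi(k) \in V(\varphi)$ for all $k \geq N$; write $\varphi = w \frown t$ with $w = \varphi\upharpoonright N$ and $t$ the tail. Each element of $V(\varphi)$ occurs infinitely often in $t$. Letting $c_V$ denote the periodic word $(v_1 v_2 \cdots v_r)^\omega$ enumerating $V = V(\varphi)$, a greedy matching shows $t \equiv c_V$ (each embeds in the other because every element of $V$ recurs infinitely often in both). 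I would then check that prepending a common prefix preserves this: $w \frown t \equiv w \frown c_V$, by mapping the prefix identically and embedding the tail into the shifted copy of the tail, which still contains every element of $V$ infinitely often past position $|w|$. Hence every $\varphi$ is $\leq^*$-equivalent to a word of the form $w \frown c_V$ with $w$ a finite word over $P$ and $V \subseteq P$. There are only countably many such pairs $(w, V)$, so there are only countably many classes, completing the proof.

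The routine reductions (downward propagation, edge counting, and faithfulness of $g \mapsto \bar g$) are straightforward. I expect the only delicate point to be the tail analysis: isolating the correct invariant and verifying that the finite prefix together with the infinitely recurring values determines the class up to $\leq^*$-equivalence. In particular one must be careful that the greedy embeddings of the tails can be performed \emph{after} the common prefix without disturbing it, which is exactly what makes the decomposition $w \frown c_V$ well behaved under $\leq^*$.
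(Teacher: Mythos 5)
Your proof is correct, and while its outer shape (a finiteness observation about $\{S : S\leq^{\sharp} T\}$ followed by a prefix--tail decomposition of each $g$) parallels the paper's, the key step is genuinely different --- and in fact repairs a defect in the paper's argument. The paper asserts that if $g_1,g_2\leq^* f$ agree on an initial segment of length $m$ and the \emph{ranges} of their tails coincide, then $g_1\equiv g_2$. That is not true: take $A,B\leq^{\sharp} T$ incomparable under $\leq^{\sharp}$, let $f$ be constantly $T$, and let $g_1=(A,B,B,B,\dots)$, $g_2=(B,A,A,A,\dots)$; the tails have the same range, yet $g_1\not\leq^* g_2$ because $g_1$ requires infinitely many positions of $g_2$ dominating $B$ and $g_2$ has only one. (Were the paper's claim correct it would even force $\{[g]:g\leq^* f\}$ to be finite, which it need not be: the functions $g_n$ consisting of $n$ copies of $B$ followed by constantly $A$ satisfy $g_n\leq^* g_m$ exactly when $n\leq m$, so they are pairwise inequivalent and all lie below the constant function $T$.) Your invariant --- a finite prefix $w$ cut past the last occurrence of any finitely-recurring value, together with the set $V$ of values occurring infinitely often, with canonical representative $w\frown c_V$ --- is the right one, and the greedy matchings you describe do establish $\varphi\equiv w\frown c_V$ and hence countability; the preliminary reductions (downward propagation of the bound, the edge-counting argument that $\{S:S\leq^{\sharp}T\}$ has finitely many $\leq^{\sharp}$-classes, and passing to the finite poset $P$ of classes) are all sound. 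What your route buys is a clean, self-contained lemma about $P^\omega$ for finite $P$ that is correct as stated; the paper's version, which conditions on the structure of $f$ and uses only the range of the tail, needs exactly the correction you supply.
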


\begin{proof} Suppose that $f$ has range that is bounded by a single finite rooted tree $T$. Then the range of $f$ must be finite. $f$ may be finite to one on some elements of the range, so we may fix $N\in \omega$ such that for $S$ in the range of $f$, $f^{-1}(S)$ is either infinite or contained in $N$. 

Enumerate those elements $S$ of the range of $f$ for which $f^{-1}(S)$ is infinite as $\{T_i:i<k\}$ and let $A_i=f^{-1}(T_i) \setminus N$. So $$f=f\upharpoonright N \cup \bigcup_{i<k}A_i\times\{T_i\}$$

 Now, if $g\leq^*f$ then it follows that there is an $m=m(g)\leq N$ such that 
 \begin{enumerate}
 \item For all $j<m$ there is $l<N$ such that $g(j)\leq^{\sharp}f(l)$, and
 \item For all $S$ in the range of $g\upharpoonright (\omega\setminus m)$ there is $i<k$ such that $S\leq^{\sharp} T_i$
 \end{enumerate}
 Therefore, if $g_1$ and $g_2$ are both below $f$ wrt $\leq^*$, and if $m=m(g_1)=m(g_2)$, and moreover $g_1\upharpoonright m=g_2\upharpoonright m$ {\em and} finally the range of $g_1\upharpoonright \omega\setminus m$ is the same as the range of $g_2\upharpoonright \omega\setminus m$ then by the above observation we have that $g_1\leq^* g_2$ and $g_2\leq^*g_1$ so they are equivalent. 
Now it is clear that there are only countably many equivalence classes $[g]$ with $g\leq^* f$ as required.

\end{proof}

\begin{corollary} The set $\{[f]:\ran(f)\text{ is bounded }\}$ is countable. 
\end{corollary}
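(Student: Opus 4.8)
The plan is to reduce the Corollary to the preceding Lemma by producing, for each finite tree $T$, a single $\leq^*$-maximum among all functions whose range is bounded by $T$. First I would record two finiteness/countability facts. The collection $\mathcal{G}$ is countable, since there are only countably many finite rooted trees up to isomorphism. Moreover, for each fixed $T\in\mathcal{G}$ the set $D_T=\{S\in\mathcal{G}: S\leq^{\sharp}T\}$ is finite: if $S\leq^{\sharp}T$ then a subdivision of $S$ embeds as a subgraph of the finite tree $T$, and only finitely many isomorphism types $S$ can arise this way.

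Next I would construct, for each $T\in\mathcal{G}$, a function $f_T\colon\omega\to\mathcal{G}$ whose range is exactly $D_T$ and which assumes each value in $D_T$ infinitely often. Then $\ran(f_T)$ is bounded by $T$, and I claim $f_T$ dominates every $f$ with $\ran(f)$ bounded by $T$ in the order $\leq^*$. Indeed, if $f(n)\leq^{\sharp}T$ for all $n$, then every $f(n)$ lies in $D_T$, so I can build a strictly increasing sequence $(k_n\colon n\in\omega)$ greedily: having chosen $k_0<\dots<k_{n-1}$, pick $k_n>k_{n-1}$ with $f_T(k_n)=f(n)$, which is possible because each value of $f_T$ recurs infinitely often. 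This gives $f(n)\leq^{\sharp}f_T(k_n)$ for all $n$, i.e.\ $f\leq^* f_T$, so $[f]\in\{[g]:g\leq^* f_T\}$.

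Finally I would assemble the count. Since a function has bounded range precisely when its range is bounded by some single $T\in\mathcal{G}$, the domination just established yields
\[
\{[f]:\ran(f)\text{ is bounded}\}\subseteq\bigcup_{T\in\mathcal{G}}\{[g]:g\leq^* f_T\}.
\]
By the preceding Lemma each set $\{[g]:g\leq^* f_T\}$ is countable, and as $\mathcal{G}$ is countable this is a countable union of countable sets, hence countable, as required.

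The main obstacle is essentially conceptual rather than technical: the continuum-many functions with range bounded by $T$ cannot be counted directly, so the key move is to recognize that they admit a single $\leq^*$-maximum $f_T$ — obtained by enumerating the finite set $D_T$ with infinite multiplicity — which is exactly what lets me feed one function at a time into the single-function Lemma. Once this reduction is in place the remaining bookkeeping (finiteness of $D_T$, countability of $\mathcal{G}$, and the greedy choice of $(k_n)$) is routine.
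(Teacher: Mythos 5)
Your proof is correct and follows essentially the same route as the paper: reduce to countably many $\leq^*$-dominating functions (one per finite tree $T$) and apply the preceding Lemma to each. The paper simply uses the constant function with value $T$ as the dominating function, which makes the domination step immediate ($k_n=n$), whereas your $f_T$ enumerating $D_T$ with infinite multiplicity works just as well but is slightly more elaborate than needed.
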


\begin{proof}
The set of constant functions in ${\mathcal G}^\omega$ is countable and any bounded function is dominated by a constant function. It follows that there are only countably many equivalence classes of bounded functions. 
\end{proof}

We now consider or more general setting and obtain Theorem \ref{theoremfirst} as a special case. 

Let $(P,\leq_P)$ be an arbitrary wqo. And define a relation $\leq^*$ on $P^\omega$ as above: $f\leq^*g$ if there is a strictly increasing sequence $(k_n)$ such that $f(n)\leq_P g(k_n)$. As above, define $\equiv$ on $P^\omega$ by $f\equiv g$ if $f\leq^*g$ and $g\leq^*f$. Then $\leq^*$ induces a partial ordering of the equivalence classes $\{[f]:f\in P^\omega\}$. We will abuse notation and denote the partial ordering of equivalence classes by $P^\omega$ as well. 
First let $P/\equiv$ be the set of  equivalence classes of $P$ with respect to the equivalence relation $S\equiv T$ in $P$ if $S\leq_P T$ and $T\leq_P S$. 

\begin{theorem} \label{thm:paul} Assume that the set of equivalence classes $P/\equiv$ is countable. Then any increasing chain of length $\omega_1$ in $P^\omega$ is eventually constant. Hence
there are no uncountable well-ordered chains in $P^\omega$. \end{theorem}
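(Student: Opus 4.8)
The plan is to attach to each $f\in P^\omega$ the set of finite ``patterns'' it realizes, show that this set is a complete invariant for $\le^*$, and then observe that it lives inside a countable set, so that $\subseteq$-increasing $\omega_1$-sequences must stabilize. Since $P/\!\equiv$ is countable and $\le^*$ depends only on $\leq_P$ (which respects $\equiv$), I may replace $P$ by $P/\!\equiv$ and assume $P$ is a countable partial order. For a finite sequence $\sigma=(p_0,\dots,p_m)\in P^{<\omega}$ and $f\in P^\omega$, say that $\sigma$ \emph{embeds} into $f$ if there are $i_0<\dots<i_m$ with $p_j\leq_P f(i_j)$ for all $j\le m$, and set
\[ E(f)=\{\sigma\in P^{<\omega}:\sigma\text{ embeds into }f\}. \]
Because $P$ is countable, $P^{<\omega}$ is countable, so each $E(f)$ is a subset of one fixed countable set.

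The core claim is that $f\le^* g$ if and only if $E(f)\subseteq E(g)$. The forward direction is immediate: an embedding $i_0<\dots<i_m$ of $\sigma$ into $f$, composed with a witness $(k_n)$ for $f\le^*g$, yields $k_{i_0}<\dots<k_{i_m}$ with $p_j\le_P f(i_j)\le_P g(k_{i_j})$, so $\sigma$ embeds into $g$. The reverse direction is the heart of the argument, and where I expect the real work to be. Assuming $E(f)\subseteq E(g)$, I would build a witness $(k_n)$ by the ``leftmost'' greedy rule: let $k_0$ be least with $g(k_0)\ge_P f(0)$, and $k_{n}$ least $>k_{n-1}$ with $g(k_n)\ge_P f(n)$. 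An easy induction shows this greedy choice is leftmost, i.e. $k_j\le i_j$ for \emph{every} increasing sequence $(i_j)$ witnessing that a prefix of $f$ embeds into $g$. To see greedy never stalls at stage $n$, apply $E(f)\subseteq E(g)$ to the prefix pattern $(f(0),\dots,f(n))\in E(f)$: it embeds into $g$ via some $i_0<\dots<i_n$, and since $i_n>i_{n-1}\ge k_{n-1}$ with $g(i_n)\ge_P f(n)$, the position $i_n$ is an admissible candidate, so $k_n$ is defined. Thus $(k_n)$ is total and strictly increasing and witnesses $f\le^*g$.

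Granting the claim, $f\mapsto E(f)$ is an order embedding of $(P^\omega,\le^*)$ into $(\mathcal{P}(P^{<\omega}),\subseteq)$ that is injective on $\equiv$-classes. Now let $\langle f_\alpha:\alpha<\omega_1\rangle$ be $\le^*$-increasing; then $\langle E(f_\alpha):\alpha<\omega_1\rangle$ is $\subseteq$-increasing inside a countable set. Whenever $E(f_\alpha)\subsetneq E(f_{\alpha+1})$ choose $x_\alpha\in E(f_{\alpha+1})\setminus E(f_\alpha)$; these $x_\alpha$ are pairwise distinct, so only countably many strict steps occur, and by regularity of $\omega_1$ their supremum is some $\gamma<\omega_1$. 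For $\alpha\ge\gamma$ we have $E(f_\alpha)=E(f_\gamma)$, i.e. $f_\alpha\equiv f_\gamma$, so the chain is eventually constant. In particular there is no strictly increasing $\omega_1$-sequence, hence no uncountable well-ordered chain in $P^\omega$. The main obstacle is really the greedy/leftmost compactness argument for the reverse direction of the claim; everything else is bookkeeping with the countable index set $P^{<\omega}$ and the regularity of $\omega_1$.
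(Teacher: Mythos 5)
Your argument is correct in its essentials, but it is a genuinely different proof from the one in the paper, and in one respect a more general one. The paper works with the set $\text{cfrange}(\mathcal{F})$ of elements of $P$ occurring cofinally (in position) in the range of the chain: a closing-off argument fixes $\alpha$ with $\text{cfrange}(\mathcal{F})=\text{cfrange}(\mathcal{F}\upharpoonright\alpha)$; the wqo property of $P$ is then used essentially (via the existence of increasing subsequences) to show that for each $\beta\geq\alpha$ all but finitely many values of $f_\beta$ lie in $\text{cfrange}(\mathcal{F})$; a recursive matching (Lemma~\ref{cfrangelemma2}, cousin to your greedy construction) shows every $f\in(\text{cfrange}(\mathcal{F}))^\omega$ satisfies $f\leq^* f_\beta$; and one concludes that every $f_\beta$ with $\beta\geq\alpha$ is equivalent to a finite modification of $f_\alpha$, of which there are only countably many up to $\equiv$. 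Your invariant $E(f)\subseteq P^{<\omega}$ compresses all of this into a single complete invariant for $\leq^*$: the leftmost/greedy induction for the direction $E(f)\subseteq E(g)\Rightarrow f\leq^* g$ is sound (the induction simultaneously shows that $k_n$ exists and that it is dominated by every witness of the length-$(n+1)$ prefix), and nowhere do you use that $P$ is a wqo --- only that $P/\equiv$ is countable. So your route proves the theorem for an arbitrary quasi-order with countably many $\equiv$-classes, and it transfers directly to Corollary~\ref{cor:paulgeneral}. What the paper's route buys instead is the concrete description of the eventual value of the chain (a finite modification of $f_\alpha$ by elements of $\text{cfrange}(\mathcal{F})$), which is more structural information than bare eventual constancy.

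One small repair is needed in your stabilization step. You only count strict successor steps $E(f_\alpha)\subsetneq E(f_{\alpha+1})$, and the absence of such steps above $\gamma$ does not rule out strict growth at limit stages: a $\subseteq$-increasing sequence that is constant below $\omega$ and strictly larger from $\omega$ onward has no successor jumps at all, yet is not constant from $\gamma=0$. Count instead, for each $x\in\bigcup_{\alpha}E(f_\alpha)$, the least $\alpha$ with $x\in E(f_\alpha)$; this is a countable set of ordinals, hence bounded by some $\gamma<\omega_1$, and then $E(f_\alpha)=E(f_\gamma)$ for all $\alpha\geq\gamma$. With that substitution everything goes through.
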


\begin{proof} Suppose that ${\mathcal F}=\{f_\alpha:\alpha<\omega_1\}\subseteq P^\omega$ is such that $f_\alpha<^* f_\beta$ for all $\alpha<\beta$. 
Let us say that $T\in P$ is cofinally in the range of ${\mathcal F}$ if for any $n$ there are $\alpha$ and $m>n$ such that $T\leq f_\alpha(m)$, and let
\[
\text{cfrange}({\mathcal F})=\{T:T\text{ is cofinally in the range of }{\mathcal F}\}.
\]

\begin{lemma}\label{cfrangelemma} There is an $\alpha$ such that $\text{cfrange}({\mathcal F})=\text{cfrange}({\mathcal F}\upharpoonright \alpha)$.
\end{lemma}
\begin{proof} Basic closing off argument
\end{proof}

\vskip 6pt

\noindent Fix $\alpha$ as in the above lemma. Then we have the following:
\begin{lemma} For all $\beta\geq \alpha$ there is $n$ such that for all $m>n$ $f_\beta(m)\in \text{cfrange}({\mathcal F})$. 
\end{lemma}

\begin{proof} Suppose not. Then there is an infinite increasing sequence $(n_k)_k$ such that for all $k$ $f_\beta(n_k)=T_k\not\in \text{cfrange}({\mathcal F})$. Now, since $\leq_{P}$ is a well-quasi ordering, we have a subsequence $(T_{k_i})_i$ which is increasing with respect to $\leq_{P}$. But then by definition $T_{k_0}$ is in the $\text{cfrange}({\mathcal F})$. Contradiction. \end{proof}

\begin{lemma}\label{cfrangelemma2} For any $f\in (\text{cfrange}({\mathcal F}))^\omega$, and for any $\beta\geq \alpha$, $f\leq^* f_\beta$.
\end{lemma}

\begin{proof} Fix $f\in (\text{cfrange}({\mathcal F}))^\omega$ and fix $\beta\geq \alpha$. Let us build an increasing sequence $(n_k)_{k\in \omega}$ recursively. Consider $f(0)\in \text{cfrange}({\mathcal F})=\text{cfrange}({\mathcal F}\upharpoonright \alpha)$. Since $f(0)$ is cofinally in the range of ${\mathcal F}\upharpoonright \alpha$, there is a $\beta_0<\alpha$ and an $m_0$ such that $f(0)\leq_{P} f_{\beta_0}(m_0)$. And since $f_{\beta_0}\leq^* f_\beta$ we may fix $n_0$ so that $f_{\beta_0}(m_0)\leq_{P} f_\beta(n_0)$. By transitivity of $\leq_{P}$ we have that $f(0)\leq_{P} f_\beta(n_0)$. 

Now, having constructed $n_0<n_1<...<n_{k-1}$ so that for all $i<k$ we have $f(i)\leq_{P} f_\beta(n_i)$, consider $f(k)$. Since $f(k)$ is cofinally in the range of ${\mathcal F}\upharpoonright \alpha$ we may fix $\beta_k$ and $m_k>n_{k-1}$ such that $f(k)\leq_{P} f_{\beta_k}(m_k)$. Now, $f_{\beta_k}\leq_{P} f_\beta$ and by definition of $\leq_{P}$ we know that there is an increasing sequence $(r_i)$ such that $f_{\beta_k}(i)\leq_{P}f_\beta(r_i)$. Since the sequence of $r_i$'s is increasing, we have that $r_{m_k}\geq m_k$ which was chosen to be larger than $n_{k-1}$. Let $n_k=r_{m_k}$ and note that 
$$f(k)\leq_{P}f_{\beta_k}(m_k)\leq_{P}f_\beta(r_{m_k})=f_\beta(n_k)$$ as required. This completes the construction of the sequence $(n_k)$ which witnesses that $f\leq^*f_\beta$.
\end{proof}

Now, to complete the proof of the Theorem, first note that in the construction above, $m_0$ could have been chosen arbitrarily large and consequentially, since $n_0\geq m_0$ $n_0$ could also be taken arbitrarily large. Thus, if $f\in (\text{cfrange}({\mathcal F}))^\omega$ the sequence $(n_k)_{k\in \omega}$ witnessing $f\leq^* f_\alpha$ can be chosen with $n_0$ as large as we like. To see that there are no $\omega_1$ chains, it suffices to show that there are only countable many distinct equivalence classes among all the $\{[f_\beta]:\beta<\omega_1\}$ and to see this we claim that for all $\beta>\alpha$, $f_\beta$ is equivalent to some finite modification of $f_\alpha$. Since $P/\equiv$ is countable, this suffices. Toward this end, fix $\beta>\alpha$ and choose $N$ large enough so that for all $m\geq N$ $f_\beta(m)$ is cofinally in the range of ${\mathcal F}$. We can do this by Lemma \ref{cfrangelemma} above. 
Consider 
$$
\hat{f}_\alpha = f_\beta\upharpoonright N \cup f_\alpha\upharpoonright (\omega\setminus N).
$$
Then notice that $\hat{f}_\alpha\upharpoonright N=f_\beta\upharpoonright N$, and also, 
$\hat{f}_\alpha\upharpoonright \omega\setminus N$  can be viewed as a function in  $(\text{cfrange}({\mathcal F}))^\omega$. Therefore, by Lemma \ref{cfrangelemma2} there is a sequence $(k_m)_{m\geq N}$ witnessing $\leq ^*$ which can be chosen so that the initial element $k_N\geq N$. Thus, it follows by extending the sequence so that $k_i=i$ for all $i<n$ that $\hat{f}_\alpha\leq^* f_\beta$. 

In the same way we have that $f_\beta\leq^* \hat{f}_\alpha$ and so there are only countably many distinct equivalence classes among the $\{[f_\beta]:\beta\geq\alpha\}$ completing the proof of the theorem.\end{proof}

In fact, the reader can quickly verify that Theorem~\ref{thm:paul} is true for regular cardinals.

\begin{corollary}\label{cor:paulgeneral} Let $\kappa$ be a regular cardinal, $P$ denote a wqo and assume that $|P/\equiv| = \kappa$. Then there are no well-ordered chains in $P^\omega$ of cardinality $>\kappa$; any increasing chain of length $>\kappa$ in $P^\omega$ is eventually constant. 
\end{corollary}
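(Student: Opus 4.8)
The plan is to run the proof of Theorem~\ref{thm:paul} essentially verbatim, replacing $\omega_1$ by $\kappa^+$ and the word ``countable'' by ``of cardinality $\leq\kappa$'', and then simply to check that the cardinal arithmetic survives. First I would reduce to the cleanest case. A well-ordered chain of cardinality $>\kappa$ has order type at least $\kappa^+$, so by passing to an initial segment I may assume I am handed a strictly $\leq^*$-increasing sequence $\mathcal{F}=\{f_\alpha:\alpha<\kappa^+\}$ and aim for a contradiction. This also yields the ``eventually constant'' clause: if a weakly $\leq^*$-increasing chain of length $>\kappa$ were not eventually constant, one could thin it to a strictly increasing subchain of length $\kappa^+$, which the contradiction will rule out.

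Next I would reprove the three lemmas underlying Theorem~\ref{thm:paul} with $\kappa^+$ in place of $\omega_1$. Define $\text{cfrange}(\mathcal{F})$ exactly as before. The analogue of Lemma~\ref{cfrangelemma} is the only place where the size hypothesis and regularity enter: viewed modulo $\equiv$ inside $P$, the set $\text{cfrange}(\mathcal{F})$ meets at most $|P/\equiv|=\kappa$ of the classes of $P$, and for each class representative $T$ and each $n\in\omega$ I would pick a witnessing index $\alpha_{T,n}<\kappa^+$. The sup of these at most $|P/\equiv|\cdot\aleph_0=\kappa$ many ordinals stays strictly below $\kappa^+$ because $\kappa^+$ is regular, producing $\alpha<\kappa^+$ with $\text{cfrange}(\mathcal{F})=\text{cfrange}(\mathcal{F}\upharpoonright\alpha)$; this is precisely the generalization of the appeal to $\mathrm{cf}(\omega_1)=\omega_1$ in the original closing-off. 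The remaining two lemmas (that $f_\beta(m)\in\text{cfrange}(\mathcal{F})$ for all large $m$, and that every $f\in(\text{cfrange}(\mathcal{F}))^\omega$ satisfies $f\leq^* f_\beta$ for $\beta\geq\alpha$) use only that $(P,\leq_P)$ is a wqo together with transitivity of $\leq_P$, so they carry over unchanged.

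Finally I would count equivalence classes among the tail of the chain. As in the theorem, for each $\beta\geq\alpha$ one finds $N=N(\beta)$ so that $f_\beta$ is $\equiv$-equivalent to the finite modification $\hat f_\alpha=f_\beta\upharpoonright N\cup f_\alpha\upharpoonright(\omega\setminus N)$; hence the class $[f_\beta]$ is determined by the fixed class $[f_\alpha]$ together with the finite tuple $(f_\beta(0),\dots,f_\beta(N-1))$ taken up to $\equiv$ in $P$. The number of such classes is therefore at most $\sum_{N<\omega}|P/\equiv|^N=\sum_{N<\omega}\kappa^N=\kappa$, using that $\kappa$ is infinite so each finite power is $\kappa$. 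But the $f_\beta$ for $\beta\in[\alpha,\kappa^+)$ are strictly $\leq^*$-increasing, hence pairwise $\equiv$-inequivalent, giving $\kappa^+$ distinct classes and contradicting the bound $\kappa$. I expect the only real obstacle to be bookkeeping: making sure the regularity hypothesis is invoked against the correct cardinal so that at most $\kappa$ witnesses cannot be cofinal in a chain of length $\kappa^+$, and confirming $\kappa^N=\kappa$ and $\kappa\cdot\aleph_0=\kappa$; everything structural is identical to the proof of Theorem~\ref{thm:paul}.
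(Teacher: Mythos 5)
Your proposal is correct and is exactly the verification the paper leaves to the reader: the paper offers no written proof of Corollary~\ref{cor:paulgeneral} beyond asserting that the proof of Theorem~\ref{thm:paul} goes through for regular cardinals, and your substitution of $\kappa^+$ for $\omega_1$, with the closing-off bounded by $|P/\equiv|\cdot\aleph_0=\kappa<\mathrm{cf}(\kappa^+)$ and the final count $\aleph_0\cdot\kappa^{<\omega}=\kappa$ against $\kappa^+$ distinct classes, is precisely that argument. (Incidentally, your argument only ever uses the regularity of $\kappa^+$, which is automatic, so the stated hypothesis that $\kappa$ itself be regular is not actually needed.)
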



\subsection{Trees with countably many rays}\label{sec:countable}

In the following sections it becomes necessary to forge trees from smaller ones. We do this by {\it joining}, from their root, trees to rays or other types of trees. Therefore, for convenience, from now on when joining a tree $T$, by its root, to a node $s$ of a tree $S$ we write {\bf glue} when the root of $T$ and the node $s$ are fused together, while {\bf attach} refers to the use of an edge for joining them.

Given a tree $T$ and a ray $B$ in $T$, we use the notation $B_n$ to denote a subtree of $T$ that is attached to the $n^\text{th}$ node of $B$ (incompatible with the nodes of $B$ above the $n^\text{th}$ node of $B$). Of course, $B_n$ depends on the node corresponding to the root of $B_n$ that is attached to the $n^\text{th}$ node of $B$, but this will usually not important and clear from the context. Let $S_0$ denote the single vertex tree, $S_1$ be the ray, $S_2$ be the tree composed of the ray with a copy of $S_1$ attached at every node, and in general:
 
 \begin{itemize}
 \item For $\alpha +1 =\beta$: $S_\beta$ is the ray with a copy of $S_\alpha$ attached at every node.  

\medskip

 \item Whenever $\beta$ is limit: select a cofinal $\psi_\beta: \omega \to \beta$ and let $S_\beta$ be comprised of the ray with a copy of $S_{\psi(n)}$ to its $n^\text{th}$ node.
  \end{itemize}
  
  \begin{figure}[h]
\centering
\begin{tikzpicture}[
  triangle/.style = {fill=black!20, regular polygon, regular polygon sides=3 },
    node rotated/.style = {rotate=180},
    border rotated/.style = {shape border rotate=180}, scale=.6,auto=left,every node/.style={scale = 0.4, circle,fill=black!100}]
    
  \node (n1) at (5,10){};
   \node[scale = 1.9, triangle, border rotated] at (9,11.55){$S_\alpha$};
   \node (n2) at (9,10.33){};
   
  \node (n3) at (4,12){};
  \draw (1.8,14) node[scale= 2.1, fill=white,inner sep=2pt]{$s(S_\beta)$};
    \node[scale = 1.9, triangle, border rotated] at (8,13.9){$S_\alpha$};
      \node (n4) at (8,12.66){};
      
  \node (n5) at (3,14){};
    \node[scale = 1.9, triangle, border rotated] at (7,16.2){$S_\alpha$};
    \node (n6) at (7,15){};
    
  \node (n7) at (2,16){};
        \node[scale = 1.9, triangle, border rotated] at (6,18.9){$S_\alpha$};
      \node (n8) at (6,17.66){};

  \node (n9) at (1,18){};
  
  \draw (3,20) node[scale= 2.5, fill=white,inner sep=2pt]{$\vdots$};

  \foreach \from/\to in {n1/n2,n3/n4,n5/n6,n7/n8}
    \draw (\from) -- (\to);

     \foreach \from/\to in {n1/n9}
    \draw[line width=1.4pt] (\from) -- (\to);
\end{tikzpicture}
\caption{The tree $S_\beta$ for $\beta=\alpha+1$ with its spine highlighted. \label{fig:OrderTrees}}
\end{figure}
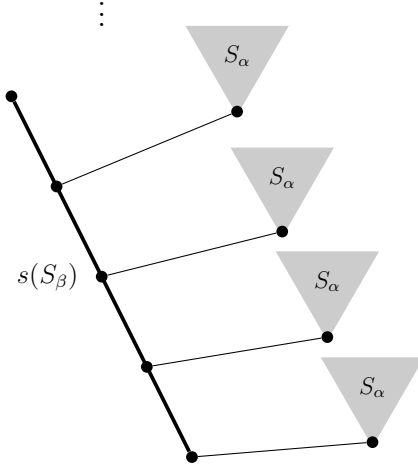

\noindent
Denote $\Ss =\{S_\alpha \mid \alpha<\omega_1\}$. By design, $\alpha<\beta$ implies $S_\alpha$ embeds into $S_\beta$. Notice also that the choice of cofinal function $\psi_\beta$ is irrelevant in the sense of topological embedability. That is, any two such functions yield topologically equivalent trees. For a fixed successor $\alpha+1 = \beta < \omega_1$ (resp. limit $\beta < \omega_1$) we define the \textbf{spine} $s(S_\beta)$ of $S_\beta$ to be the ray where the copies of $S_\alpha$ (resp. $S_\gamma$ with $\psi_\beta(n) = \gamma$ for some $n \in \omega$) are attached to, as illustrated above. It is helpful to note that for every $t$ in the spine of $S_\alpha$, $\{s\in S_\alpha:t\leq_{S_\alpha}s\}$ is topological equivalent to $S_\alpha$. 

\begin{lemma}\label{lem:fullbinary} The collection $\Ss =\{S_\alpha \mid \alpha<\omega_1\}$ contains $\omega_1$ topological types of locally finite trees. Moreover for any tree $T$ the following are equivalent:
\begin{enumerate}
\item  $S_\alpha \leq^{\sharp} T$ for all $\alpha \in \omega_1$, 
\item If $T_2$ is the full binary tree, then $T_2\leq^\sharp T$ 
\item $T$ has uncountably many branches
\end{enumerate}
\end{lemma}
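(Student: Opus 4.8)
The plan is to attach to every locally finite tree $T$ a single ordinal invariant that is monotone under $\leq^{\sharp}$ and that simultaneously (a) detects when $T$ has uncountably many branches and (b) separates the $S_\alpha$ into $\omega_1$ distinct types. For a locally finite rooted tree $T$, let $\partial T$ denote its space of branches (rays from the root), topologised as a closed subset of the product of the finite children-sets; by König's lemma local finiteness makes $\partial T$ compact and metrizable. The first thing I would record is that $\leq^{\sharp}$ acts covariantly on branch spaces: a subdivision of $T$ sitting inside $S$ as a subgraph sends distinct branches of $T$ to eventually-disjoint rays of $S$, hence induces a continuous injection $\partial T\hookrightarrow\partial S$; since $\partial T$ is compact and $\partial S$ Hausdorff this is a topological embedding. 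Consequently the Cantor--Bendixson rank $\rho(T):=\mathrm{CB}(\partial T)$ satisfies $T\leq^{\sharp}S\Rightarrow\rho(T)\leq\rho(S)$, and mutually equivalent trees have equal rank.

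For the ``moreover'' clause I would prove $\rho(S_\alpha)\geq\alpha$ by transfinite induction on $\alpha$, reading the recursive definition of $S_\alpha$ directly off the branch space. Here $\partial S_{\alpha+1}$ is a countable disjoint union of copies of $\partial S_\alpha$ (one per node of the spine) together with the single spine branch $p$, the copies converging to $p$; the surviving top points of the copies accumulate at $p$, so $p$ persists one derivative longer than the copies, giving $\rho(S_{\alpha+1})\geq\rho(S_\alpha)+1$. At a limit $\beta$ the copies hanging off the spine realise ranks $\geq\psi_\beta(n)$ with $\psi_\beta$ cofinal in $\beta$, forcing $p$ to survive past every level $<\beta$. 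Thus $\alpha\mapsto\rho(S_\alpha)$ is cofinal in $\omega_1$; since unequal ranks force inequivalent trees, $\Ss$ carries $\omega_1$ topological types. The same invariant bounds countable-branch trees: if $T$ has only countably many branches then $\partial T$ is a countable compact metric space, hence scattered, so $\rho(T)=\delta<\omega_1$.

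For the equivalence I would run the cycle $(2)\Rightarrow(1)\Rightarrow(3)\Rightarrow(2)$. For $(2)\Rightarrow(1)$, I first show $S_\alpha\leq^{\sharp}T_2$ for every $\alpha$ by transfinite induction, exploiting the self-similarity of the full binary tree: fix the all-left ray as a spine, and into the copy of $T_2$ hanging from each right child embed the relevant $S_\alpha$ (resp. $S_{\psi_\beta(n)}$) supplied by the induction hypothesis; composing with $T_2\leq^{\sharp}T$ gives $S_\alpha\leq^{\sharp}T$ for all $\alpha$. For $(1)\Rightarrow(3)$ I argue contrapositively using the rank: if $T$ has countably many branches then $\rho(T)=\delta<\omega_1$, while $S_\alpha\leq^{\sharp}T$ would force $\alpha\leq\rho(S_\alpha)\leq\delta$; hence $S_{\delta+1}\not\leq^{\sharp}T$ and $(1)$ fails. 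For $(3)\Rightarrow(2)$ I call a node \emph{heavy} if uncountably many branches pass through it and build a subdivided binary tree inside $T$ out of heavy nodes; the crux is that above every heavy node there is a heavy node having two incomparable heavy descendants, for otherwise the heavy nodes above it would form a single ray, and since each non-heavy child off that ray carries only countably many branches, $T$ would have only countably many branches through the node, contradicting heaviness. Recursively choosing such splitting nodes yields $T_2\leq^{\sharp}T$.

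The main obstacle is the direction $(1)\Rightarrow(3)$, and inside it the two facts carrying all the weight: that $\leq^{\sharp}$ really does embed branch spaces so that CB rank is monotone, and the lower bound $\rho(S_\alpha)\geq\alpha$. I also want to flag that local finiteness is genuinely used here and in $(3)\Rightarrow(2)$: a single root with $\omega_1$ pendant rays has uncountably many branches yet does not admit $T_2$ as a topological minor, so the whole equivalence is false without the standing assumption that $T$ is locally finite.
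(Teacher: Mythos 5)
Your argument is correct, but it is not the paper's argument: the paper offers no proof of Lemma~\ref{lem:fullbinary} at all, deferring entirely to the citation~\cite{B}, and the machinery it does develop around this lemma (the order function $o(T)=\sup\{\alpha: S_\alpha\leq^{\sharp}T\}$ and Lemma~\ref{lem:fitcombs}) uses the trees $S_\alpha$ themselves as the yardstick. You instead introduce an \emph{intrinsic} invariant, the Cantor--Bendixson rank of the compact metrizable branch space $\partial T$, prove it is monotone under $\leq^{\sharp}$, and bound $\rho(S_\alpha)\geq\alpha$ from the recursion; this buys a self-contained proof of both the ``$\omega_1$ types'' claim and the implication $(1)\Rightarrow(3)$, and it makes transparent why countably many branches caps every embeddable $S_\alpha$ at a countable ordinal. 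The steps $(2)\Rightarrow(1)$ (self-similarity of $T_2$ along the all-left spine) and $(3)\Rightarrow(2)$ (the heavy-node dichotomy, where local finiteness enters via pigeonhole on the finitely many children) are the standard arguments and are sound; your observation that a non-decreasing $\alpha\mapsto\rho(S_\alpha)$ dominating the identity has uncountable range closes the first claim. The one place I would urge you to write out in full is the assertion that a subgraph embedding of a subdivision induces a \emph{continuous} injection $\partial T\hookrightarrow\partial S$: ``eventually-disjoint images'' gives injectivity but not continuity by itself. The clean justification is that for a basic clopen set $[s]\subseteq\partial S$ the condition ``the end of $f(b)$ lies above $s$'' is decided by whether $b$ traverses the unique $f$-preimage of the edge below $s$ (an injective path in a tree crosses any fixed edge at most once), which is a clopen condition on $b$; compactness of $\partial T$ then upgrades the continuous injection to an embedding onto a closed subspace, which is what the nesting of Cantor--Bendixson derivatives actually requires.
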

\begin{proof} The proof of this fact can be found in \cite{B}.
\end{proof}

\begin{definition} Define $\TT_{\omega_1} \subset \TT$ to contain all trees with countably many rays and declare $o:\TT_{\omega_1} \to \omega_1$ by
\[
o(T) = sup\{ \alpha:S_\alpha \leq^{\sharp} T\}
\]
Call this function the {\bf order} of a tree. 
\end{definition}

Clearly, finite trees have order $0$ while any infinite tree has order at least $1$. Considering $\omega_1$ and $\bf{\TT_{\omega_1}}$ as categories, the order assignment is functorial where the assignment $\alpha \mapsto S_\alpha$ defines a left adjoint to it.

\begin{lemma}\label{lem:fitcombs} For any $T\in \TT_{\omega_1} $ and $\alpha \in \omega_1$, TFAE:

\begin{enumerate} 
\item $o(T)=\alpha$.
\medskip
\item $S_\alpha \leq^{\sharp} T$ and $S_\gamma \not \leq^{\sharp} T$ for all other $\gamma > \alpha$. 
\medskip
\item $S_\alpha \leq^{\sharp} T$ and from all the possible embeddings of $S_\alpha$ in $T$ there can be only finitely many rays of $T$ containing copies of $s(S_\alpha)$.
\end{enumerate}
\end{lemma}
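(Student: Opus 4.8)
The plan is to prove the cycle $(1)\Rightarrow(2)\Rightarrow(3)\Rightarrow(1)$, after recording two reformulations that make the role of local finiteness transparent. First, since the spine $s(S_\gamma)$ is itself a ray, an embedding of $S_\gamma$ into $T$ sends $s(S_\gamma)$ cofinally into some ray of $T$; conversely any ray of $T$ carrying a copy of $s(S_\gamma)$ already witnesses $S_\gamma\leq^{\sharp}T$. Writing $R_\gamma$ for the set of rays of $T$ that carry a copy of $s(S_\gamma)$, I thus have $S_\gamma\leq^{\sharp}T$ iff $R_\gamma\neq\emptyset$, and $o(T)=\sup\{\gamma:R_\gamma\neq\emptyset\}$. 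Because $T\in\TT_{\omega_1}$ has only countably many rays, Lemma~\ref{lem:fullbinary} forbids $S_\gamma\leq^{\sharp}T$ for all $\gamma$, so $\{\gamma:S_\gamma\leq^{\sharp}T\}$ is a proper, downward-closed (by $\gamma<\delta\Rightarrow S_\gamma\leq^{\sharp}S_\delta$) subset of $\omega_1$; hence $o(T)<\omega_1$ is genuinely an ordinal and $S_\delta\leq^{\sharp}T$ for every $\delta<o(T)$.

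The central device is an \emph{order-preserving ray}. Using self-similarity of the spine together with local finiteness, I first check that for any vertex $v$ with children $v^1,\dots,v^d$ one has $o(T_v)=\max_j o(T_{v^j})$, where $T_v$ is the subtree above $v$: any witness to $S_\gamma\leq^{\sharp}T_v$ routes its spine cofinally through a single child $v^j$, and the self-similarity remark (that $\{s:t\leq s\}$ is equivalent to $S_\gamma$ for $t$ on the spine) shows $S_\gamma\leq^{\sharp}T_{v^j}$. The maximum being attained by finite branching, I build a ray $B=u_0u_1u_2\cdots$ by always passing to a child realizing the maximum, so that $o(T_{u_i})=o(T)$ for all $i$. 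Decomposing $T$ along $B$ exhibits it as the ray $B$ together with the side subtrees hanging off the $u_i$; this concrete picture drives the remaining steps.

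For $(1)\Rightarrow(2)$ the only real content is attainment of the supremum, i.e.\ $S_{o(T)}\leq^{\sharp}T$; the successor case is automatic since a supremum equal to a successor is a maximum, so assume $o(T)=\alpha$ is a limit with cofinal sequence $\psi$. Reading $T$ as $B$ plus side subtrees, I observe that above each level the side-orders must be cofinal in $\alpha$: if beyond $u_i$ every side subtree had order $\leq\delta_0<\alpha$, then only copies of $S_\delta$ with $\delta\leq\delta_0$ could hang off the tail of $B$, forcing $o(T_{u_i})\leq\delta_0+1<\alpha$, contradicting $o(T_{u_i})=\alpha$. Hence I can greedily choose heights $i_0<i_1<\cdots$ and side subtrees there of order strictly above $\psi(k)$ (so that $S_{\psi(k)}$ embeds into them, avoiding any appeal to attainment lower down), map the spine of $S_\alpha$ onto a subdivision of $B$ hitting the $u_{i_k}$, and hang the $S_{\psi(k)}$'s, yielding $S_\alpha\leq^{\sharp}T$. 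Together with $S_\gamma\not\leq^{\sharp}T$ for $\gamma>\alpha$ (immediate from the supremum) this gives (2); the converse $(2)\Rightarrow(1)$ is a one-line supremum computation.

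Finally $(2)\Leftrightarrow(3)$ is a ray-counting statement: since both (2) and (3) assert $S_\alpha\leq^{\sharp}T$, and $S_\gamma\not\leq^{\sharp}T$ for all $\gamma>\alpha$ is equivalent to $S_{\alpha+1}\not\leq^{\sharp}T$ (upward closure from $\alpha+1$), it suffices to prove that $R_\alpha$ is infinite iff $S_{\alpha+1}\leq^{\sharp}T$. For the forward direction, an embedding of $S_{\alpha+1}$ sends the copies of $S_\alpha$ attached along its spine into infinitely many distinct subtrees hanging off a common ray, and these lie on infinitely many distinct rays of $T$, each carrying a copy of $s(S_\alpha)$. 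The converse is where I expect the main difficulty: given infinitely many $S_\alpha$-carrying rays, I run a K\"onig argument on the (locally finite) subtree they span, selecting at each node a child through which infinitely many of them pass; this produces a branch $B'$ from which infinitely many of these rays diverge at an increasing sequence of nodes, and at each divergence node the self-similarity remark lets me relocate a whole copy of $S_\alpha$ into the diverging subtree, so that hanging these along $B'$ embeds $S_{\alpha+1}$. To convert ``infinitely many rays'' into the finiteness clause of (3), I use that under $o(T)=\alpha$ no single side subtree can carry infinitely many $S_\alpha$-rays (that would force its order, hence $o(T)$, above $\alpha$), so $R_\alpha$ is finite exactly when $S_{\alpha+1}\not\leq^{\sharp}T$. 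The delicate points throughout are precisely those powered by local finiteness: the attainment of the per-vertex maximum that yields the order-preserving ray, and the K\"onig selection together with the self-similar relocation of $S_\alpha$ into a diverging subtree.
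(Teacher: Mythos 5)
Your proof is correct in its essentials, but it takes a genuinely different route from the paper's at the two places where real work is needed. For the attainment step $(1)\Rightarrow(2)$ at limit $\alpha$, the paper fixes a witness $F_\gamma$ for each $\gamma<\alpha$, builds a ray edge-by-edge so that a nested sequence of cofinal sets $A_1\supseteq A_2\supseteq\cdots$ of spines passes through each initial segment, and then diagonalizes ($\gamma_n\in A_{t(n)}\setminus A_{t(n+1)}$) when $\bigcap_n A_n$ fails to be cofinal; you instead build an \emph{order-preserving ray} by repeatedly descending into a child whose subtree has maximal order (legitimate by local finiteness and the identity $o(T_v)=\max_j o(T_{v^j})$, which itself rests on the self-similarity of the spine), and then show the side-subtree orders along this ray are cofinal in $\alpha$. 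Your version is arguably cleaner: it avoids the pigeonhole on a fixed family of witnesses and the diagonal case split, and your trick of choosing side subtrees of order \emph{strictly above} $\psi(k)$ neatly sidesteps any inductive appeal to attainment at lower levels. For $(2)\Leftrightarrow(3)$ the paper only gestures at ``a similar argument''; you supply an actual proof via the equivalence ($R_\alpha$ infinite iff $S_{\alpha+1}\leq^\sharp T$), with the hard direction handled by an accumulation-point/K\"onig selection plus self-similar relocation of a copy of $S_\alpha$ into each diverging subtree. This fills a genuine gap in the paper's exposition.

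One blemish: your closing parenthetical --- that under $o(T)=\alpha$ no single side subtree can carry infinitely many $S_\alpha$-rays ``because that would force its order above $\alpha$'' --- is exactly the contrapositive of the implication you are in the middle of proving, applied to a subtree, so you cannot invoke it as known. Fortunately you do not need it: the K\"onig construction already guarantees that for every node $v_i$ of $B'$ infinitely many of the chosen rays pass through $v_i$ while at most one equals $B'$, so the divergence nodes are unbounded along $B'$ and the increasing sequence you want exists regardless of whether infinitely many rays happen to diverge at a single node. You should either delete that sentence or replace it with this one-line unboundedness observation.
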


\begin{proof} $\left[(1) \Leftrightarrow (2)\right]$ Here we must only show that $o(T) = \alpha \Rightarrow S_\alpha \leq^{\sharp} T$, the rest follows by the definition of order. The claim follows easily for a successor $\alpha$, therefore we focus on the limit. For every $\gamma < \alpha$, let $S'_\gamma$ be the subtree of $T$ spanned by one morphism witnessing $S_\gamma \leq^{\sharp} T$. Define $F_\gamma$ to be the subtree of $T$ that results from adding to $S'_\gamma$ the finite ray in $T$ that joins the root of $T$ with the root of $S'_\gamma$. And let $Sp_\gamma$ be the image of the spine of $S_\gamma$ in $S'_\gamma$ along with the finite ray joining the root of $T$ with the root of $S'_\gamma$. Start from the root of $T$ and choose an edge $e_1$ so that for a cofinal in $\alpha$ set $A_1$, all the spines $Sp_\alpha$ from the subtrees in the collection $\{F_\gamma :\gamma\in A_1\}$ contains that edge. This is possible since $T$ is locally finite and contains infinitely many rays. Next, choose an edge $e_2$ adjacent to the vertex of $e_1$ that is not connected to the root of $T$ for which there exists a cofinal, in $\alpha$, $A_2 \subseteq A_1$ so that all spines $Sp_\alpha$ from the subtrees in $\{F_\beta : \beta\in A_2\}$ contain said edge. The same reasons that allowed us to choose $e_1$ also apply to $e_2$. Continue this way and construct the ray $B = e_1e_2\ldots$. We claim that there exists an embedding of $S_\alpha$ into $T$ where the spine of $S_\alpha$ is mapped to $B$. First note that if $\bigcap_n A_n$ is cofinal in $\alpha$ then we are done. Otherwise,
construct $\{\gamma_n:n\in \omega\}$ increasing and cofinal in $\alpha$ and $t\in \omega^\omega$ recursively so that $\gamma_n\in A_{t(n)}\smallsetminus A_{t(n+1)}$ for each $n\in \omega$. 
Therefore, if $S$ is the copy of an $S_\alpha$ obtained by attaching a copy of $S_{\gamma_n}$ to the $n^{th}$ node of a fixed ray, then a subdivision of $S$ embeds into $T$ with its spine mapped onto the constructed ray $e_1e_2.....$. And so $S_\alpha \leq^{\sharp} T$.
 
 \noindent
 $\left[(1) \Leftrightarrow (3)\right]$ A similar argument shows that if there is an infinite number of copies of $S_\alpha$ in $T$ so that the copies of their respective spines encompass more than a finite number of rays in $T$, then one can construct a copy of $S_{\alpha+1}$ in $T$. This a clear contradiction of $o(T) = \alpha$.
\end{proof}

In view of the above, for a ray $B$ of a tree $T$ and an ordinal $\alpha$ write $B \lhd \alpha$ if for all $\beta < \alpha$ there exists $s\in \omega^\omega$ so that $o(B_{s(i)}) \geq \beta$ for all $i\in \N$. We thus obtain the following equivalent characterization of $o(T)$:

\begin{lemma} \label{lem:BB} For any $T\in \TT_{\omega_1} $, $o(T)=0$ if, and only if, $T$ is finite and
\[
o(T) = sup\{\alpha \mid \exists \text{ a ray $B$ of $T$ with } B\lhd \alpha\},
\]
otherwise. Moreover, the set of rays
\[
\BB(T) = \{B \mid sup\{\alpha \mid B\lhd \alpha\} =o(T) \}
\]
\noindent
is non-empty and finite.

\begin{proof} This results follows directly from the previous lemma.
\end{proof}
\end{lemma}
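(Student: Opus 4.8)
The plan is to pin down the one fact that makes everything work: for a ray $B$ of $T$ and an ordinal $\alpha$, the condition $B\lhd\alpha$ is equivalent to the existence of a topological embedding of $S_\alpha$ into $T$ whose spine is carried by $B$ (that is, the image of $s(S_\alpha)$ is a subdivision sitting inside $B$). Writing $\rho(B)=\sup\{\alpha: B\lhd\alpha\}$, this says exactly that $\rho(B)$ is the largest $\alpha$ for which $S_\alpha$ embeds with spine running along $B$. Before proving it I would dispose of the first clause: $S_1$ is the ray, every infinite locally finite tree contains a ray by K\"onig's lemma, and no infinite tree embeds into a finite one, so $o(T)=0$ iff $T$ is finite.

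To prove the key equivalence I would rerun the construction of Lemma~\ref{lem:fitcombs}, separating the successor and limit cases. For the direction $B\lhd\alpha\Rightarrow S_\alpha\leq^{\sharp}T$ with spine on $B$: when $\alpha=\gamma+1$, taking $\beta=\gamma$ in the definition of $B\lhd\alpha$ yields an increasing $s\in\omega^\omega$ with $o(B_{s(i)})\geq\gamma$ for all $i$; since $o(B_{s(i)})\geq\gamma$ gives $S_\gamma\leq^{\sharp}B_{s(i)}$ (by Lemma~\ref{lem:fitcombs}$(1)\Leftrightarrow(2)$ together with $S_\gamma\leq^{\sharp}S_\delta$ for $\gamma\leq\delta$), attaching these copies at the nodes $s(i)$ realizes a subdivision of $S_{\gamma+1}$ with spine on $B$. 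When $\alpha$ is a limit I would fix an increasing cofinal $\gamma_n\to\alpha$ and, using that $B\lhd\alpha$ supplies infinitely many side trees of order $\geq\gamma_n$ for each $n$, pick greedily increasing positions $p_0<p_1<\cdots$ on $B$ with $o(B_{p_n})\geq\gamma_n$; attaching a copy of $S_{\gamma_n}$ inside $B_{p_n}$ builds a tree topologically equivalent to $S_\alpha$ (the choice of cofinal sequence being irrelevant), again with spine on $B$. The reverse direction is a readout: an embedding of $S_\alpha$ with spine on $B$ sends the side trees of $S_\alpha$ into side trees $B_n$ at strictly increasing indices, and inspecting their orders gives, for each $\beta<\alpha$, the witnessing sequence $s$, i.e.\ $B\lhd\alpha$.

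With the equivalence in hand the displayed formula is immediate: every embedding of any $S_\alpha$ into $T$ carries the spine onto some ray, so $o(T)=\sup\{\alpha:S_\alpha\leq^{\sharp}T\}=\sup_B\rho(B)=\sup\{\alpha:\exists\,B,\ B\lhd\alpha\}$. For $\BB(T)$ I would first note that $\{\beta:B\lhd\beta\}$ is an initial segment of the ordinals and that its supremum is always attained, so $\rho(B)=\alpha$ forces $B\lhd\alpha$ (for limit $\alpha$ this is the usual passage $B\lhd\gamma{+}1$ for all $\gamma<\alpha\Rightarrow B\lhd\alpha$). Non-emptiness then follows because $o(T)=\alpha$ gives $S_\alpha\leq^{\sharp}T$ — including the limit case, which is exactly the nontrivial half of Lemma~\ref{lem:fitcombs}$(1)\Leftrightarrow(2)$ — and the ray carrying its spine has $\rho(B)=\alpha=o(T)$. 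Finiteness follows because $\rho(B)=o(T)=\alpha$ implies $B\lhd\alpha$, hence $S_\alpha$ embeds with spine on $B$, so $B$ lies among the finitely many rays of $T$ that can contain a copy of $s(S_\alpha)$ guaranteed by Lemma~\ref{lem:fitcombs}$(1)\Leftrightarrow(3)$.

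The main obstacle I expect is not the combinatorial construction, which simply mirrors Lemma~\ref{lem:fitcombs}, but transferring the equality $\rho(B)=o(T)$ into the statement ``$B$ carries a copy of the spine $s(S_{o(T)})$'' so that the finiteness clause of Lemma~\ref{lem:fitcombs} can be applied; in the limit case this genuinely needs that $S_{o(T)}$ itself (and not merely each $S_\beta$ with $\beta<o(T)$) embeds, which is precisely the delicate limit half already established there. The rest is subdivision bookkeeping and an elementary attainment-of-supremum argument for $\rho$.
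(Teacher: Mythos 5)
Your argument is correct and follows exactly the route the paper intends: the paper's proof is the single line ``follows directly from the previous lemma,'' and your write-up is a faithful expansion of that, making explicit the bridge $B\lhd\alpha\Leftrightarrow S_\alpha\leq^{\sharp}T$ with spine carried by $B$ and then reading off the displayed formula, non-emptiness, and finiteness from the equivalences $(1)\Leftrightarrow(2)$ and $(1)\Leftrightarrow(3)$ of Lemma~\ref{lem:fitcombs}. Your attention to the attainment of the supremum $\rho(B)$ at limit stages is exactly the point the paper leaves implicit, and you handle it correctly.
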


For $1\leq n\in \omega$ and $1\leq \alpha \in \omega_1$ we denote $\TT^n_\alpha$ denote the collection of all locally finite trees $T$ of order at most $\alpha$ such that $|\BB(T)| \leq n$.
\begin{theorem}\label{thm:countablecase} Any chain of trees with at most countably many rays is composed of at most $\omega_1$ topological types.
\end{theorem}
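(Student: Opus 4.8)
The plan is to prove this by induction on the order $\alpha \leq \omega_1$ together with a secondary induction on the bound $n$ on the number of rays in $\BB(T)$, thereby analyzing each class $\TT^n_\alpha$ separately and showing each contains only $\omega_1$ many topological types. The upshot is that a chain in $\TT_{\omega_1}$ decomposes according to the order function $o$ and the finite invariant $|\BB(T)|$, and the bound $\omega_1$ follows by assembling the bounds for the pieces.

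First I would set up the base of the induction. The trees of order $0$ are exactly the finite trees by Lemma~\ref{lem:BB}, and there are only countably many of those, so $\TT^n_0$ contributes nothing uncountable. For the successor step on the order, I would exploit the structural characterization in Lemma~\ref{lem:fitcombs}: if $o(T) = \alpha+1$, then along one of the finitely many spines $B \in \BB(T)$ the subtrees $B_m$ attached to $B$ have orders approaching $\alpha$ (more precisely witnessing $B \lhd \alpha+1$). The key observation is that such a tree is essentially coded by the finite data $\BB(T)$ together with the sequences of subtrees $(B_m)_m$ hanging off the spines, each of which is a tree of order $\le \alpha$ with a controlled ray-count. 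This is precisely the setting of the abstract partial order $P^\omega$ from Theorem~\ref{thm:paul}: taking $P$ to be (the equivalence classes of) the trees of order $\le \alpha$ under $\leq^\sharp$, a tree of order $\alpha+1$ with a single distinguished spine corresponds to a function in $P^\omega$, and Lemma~\ref{T_f lemma} (the very first lemma, on $T_f \leq^\sharp T_g$) tells us that $\leq^\sharp$ between such trees is exactly $\leq^*$ on the coding functions. By the inductive hypothesis $P/\!\equiv$ is countable, so Theorem~\ref{thm:paul} gives at most $\omega_1$ many classes at the next level.

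For the step in $n$ I would argue that a tree with $|\BB(T)| = n$ can be analyzed by splitting at the finite branch point where the $n$ critical rays diverge, reducing to finitely many trees each with a single critical ray (the $n=1$ case), so the finite-ray bookkeeping multiplies the count by a finite factor and preserves the $\omega_1$ bound. The limit step for the order is the delicate part: given $o(T) = \alpha$ with $\alpha$ a limit ordinal, Lemma~\ref{lem:fitcombs} shows $S_\alpha \leq^\sharp T$ and that the witnessing spines encompass only finitely many rays, so again $\BB(T)$ is finite and nonempty; one recovers a coding by functions whose values have orders cofinal in $\alpha$, and one applies the abstract theorem with $P$ now the union over $\beta < \alpha$ of the lower classes. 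Finally, since $o: \TT_{\omega_1} \to \omega_1$ has range of size at most $\omega_1$ and each fiber is covered by the countably many $\TT^n_\alpha$ ($n \in \omega$), a counting argument bounds the total number of topological types along any chain by $\omega_1$.

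I expect the main obstacle to be the limit step, and specifically making rigorous the claim that trees at a limit order are faithfully and finitely coded. The danger is that at a limit ordinal the choice of cofinal sequence $\psi_\beta$ introduces ambiguity, and that the orders of the hanging subtrees $B_m$ need not stabilize but only approach $\alpha$; controlling the topological type through this approximation — so that $\leq^\sharp$ on the trees really does reduce to a $\leq^*$-type relation on a well-quasi-ordered parameter space of \emph{strictly smaller} order — is where care is needed. The remark after Lemma~\ref{lem:fullbinary}, that the choice of $\psi_\beta$ is irrelevant up to topological equivalence, should be the tool that tames this ambiguity, and Lemma~\ref{lem:fitcombs}(3) guaranteeing finiteness of the critical ray set is what keeps the coding finitary so that Theorem~\ref{thm:paul} applies.
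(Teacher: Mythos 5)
Your overall architecture matches the paper's: induction on $(\alpha,n)$ over the classes $\TT^n_\alpha$, coding a tree by the sequence of subtrees hanging off a distinguished ray, and invoking Theorem~\ref{thm:paul}. However, there are two genuine gaps. First, your inductive invariant is too weak. You announce that each $\TT^n_\alpha$ contains ``only $\omega_1$ many topological types,'' but then at the successor step you need $P/\!\equiv$ to be \emph{countable} in order to apply Theorem~\ref{thm:paul}; an invariant of size $\omega_1$ does not deliver that. The correct statement to carry through the induction is that every \emph{chain} in $\TT^n_\alpha$ has only countably many types, which, since $\leq^\sharp$ is a wqo, yields via Kurepa's theorem that $\TT^n_\alpha$ itself has only countably many equivalence classes; that is what keeps the parameter poset $P$ countable at every stage. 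The final bound of $\omega_1$ then comes only at the very end, from concatenating the (at most $\omega_1$ many) countable pieces of a chain as the order increases.

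Second, your treatment of the case $|\BB(T)|=N>1$ --- ``split at the finite branch point where the critical rays diverge, reducing to the $n=1$ case up to a finite multiplicative factor'' --- does not work as stated, for two reasons. (i) The decomposition of a single tree into single-ray pieces is not canonical along a chain: an embedding $T_\beta\leq^\sharp T_\gamma$ need not respect your chosen branch points or match up the pieces. The paper's proof spends real effort here, cohesively selecting one ray $B_\beta\in\BB(T_\beta)$ for each $\beta$ (pushing the choice forward through the embeddings at successors and using a pigeonhole/cofinality argument at limits) and then proving a lemma that for all $\beta<\gamma$ some witnessing embedding maps $B_\beta$ onto $B_\gamma$. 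Without this coherence the coding by functions in $P^\omega$ is not order-preserving and Theorem~\ref{thm:paul} cannot be applied. (ii) Even after fixing one critical ray, the subtrees glued to its nodes need not all have order $<\alpha$; they can have order exactly $\alpha$ but with strictly fewer than $N$ critical rays. So the secondary induction on $N$ enters through the choice of the poset $P=\bigcup\{\TT^n_\beta:\beta<\alpha,\ n\in\omega\}\cup\bigcup_{m<N}\TT^m_\alpha$, not merely as finite bookkeeping multiplying a count. You would need to repair both points to complete the argument.
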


\begin{proof}  We prove by induction on $\alpha$ that for each $n$, any chain composed of trees from $\TT^n_\alpha$ contains at most countably many topological types. Since the locally finite trees are a wqo this also implies that there are only countably many equivalence classes in each $\TT^n_\alpha$.

The case for $n=1$ and $\alpha = 1$ was dealt with in Theorem~\ref{thm:paul}. Notice that if for a fixed $\alpha$ the claim is true of all $\TT^n_\alpha$ then it also holds for $\TT^1_{\alpha+1}$. Indeed, any tree in $T\in \TT^1_{\alpha+1}$ can be identified with a function 
\[
f\in \left(\bigcup_{i\in\omega}\TT^i_{\alpha}\right)^\omega
\]
 as a direct consequence of Lemma~\ref{lem:BB} as follows. If $B \in \BB(T)$ is the unique ray such that $B\lhd \alpha+1$ and if $\{t_n:n\in\omega\}$ are all the nodes of $B$ to which are glued trees of order $\alpha$, and we let $f_T(n)\in \bigcup_{i\in\omega}\TT^i_{\alpha}$ denote the tree glued to $t_n$. Then for any $S\in \TT^1_{\alpha+1}$ we have that $S\leq^\sharp T$ if and only if $f_S\leq^*f_T$. 
Thus, using our inductive hypothesis that there are only countably many topological types in $\bigcup_{i\in\omega}\TT^i_{\alpha}$
and applying Theorem~\ref{thm:paul} we see chains in $T\in \TT^1_{\alpha+1}$ consist of at most countably many topological types.

A similar argument also establishes that if the claim holds true for $\TT^n_\beta$ for all $n$ and all $\beta<\alpha$, then it also holds for $\TT^1_{\alpha}$.  
 
  Next, fix an $\alpha \in \omega_1$ and $N>1$ assume that the result holds for $\TT^m_{\alpha}$ for all $m<N$ and for $\TT^n_\beta$ for all $\beta<\alpha$ and all $n\in \omega$.  
  
 We next show it holds true for $\TT^N_{\alpha}$. Let $C = \{T_\beta \mid \beta \in \omega_1\}$ be a chain of trees from $\TT^{N}_{\alpha}$.
 We can cohesively select, for each $\beta \in \omega_1$, a ray $B_\beta\lhd\alpha$ in $\BB(T_\beta)$ as follows.  
Begin by choosing any ray $B_0$  from $\BB(T_0)$ such that each $B_0\lhd \alpha$. And for all $n<\omega$ use the embeddings a subdivision of $T_n$ into $T_{n+1}$ witnessing $T_n\leq^\sharp T_{n+1}$ to select $B_{n+1}$ recursively as the image of $B_n$ under that embedding. Similarly, at successor stages, choose $B_{\beta+1}$ as the image of $B_\beta$ under the appropriate embedding. 

  At a limit stages $\gamma$, consider the embedding of the subdivision of $T_\beta$ to $T_\gamma$ for $\beta<\gamma$ witnessing $T_\beta\leq^\sharp T_{\gamma}$. $B_\beta$ is mapped to some ray of order $\alpha$ in $T_\gamma$ so we may choose one (of the $N$ such rays), call it $B_\gamma$ so that for cofinal in $\gamma$ number of embeddings $T_\beta\to T_\gamma$ we have that $B_\beta$ is mapped to $B_\gamma$.  

\begin{lemma} For all $\beta<\gamma<\omega_1$ there is an embedding of a subdivision of $T_\beta$ to $T_\gamma$ witnessing $T_\beta\leq^\sharp T_{\gamma}$ such that $B_\beta$ is mapped onto $B_\gamma$
\end{lemma}
\begin{proof} By induction on $\gamma$. For $\gamma$ a successor $\gamma=\eta+1$, by our inductive hypothesis, we have an embedding from a subdivision of $T_\beta$ to $T_\eta$ mapping $B_\beta$ to $B_\eta$. And by definition of $B_{\eta+1}$ there is another such embedding from $T_\eta$ to $T_{\eta+1}$ mapping $B_\eta$ to $B_{\eta+1}$. Taking common subdivisions and composing the embeddings gives an embedding mapping $T_\beta$ to $T_{\eta+1}$ as required. 

For limit $\gamma$ by definition of $B_\gamma$ we may similarly find $\beta'$ with $\beta<\beta'<\gamma$ and an embedding of a subdivision of $T_{\beta'}$ to $T_\gamma$ mapping $B_{\beta'}$ to $B_\gamma$. And as above, using the induction hypothesis on $\beta'$ and by composing embeddings we have the required embedding from $T_\beta$ to $T_\gamma$. 
\end{proof}

Now we proceed as in the previous cases. Let

$$P=\bigcup\{\TT^n_\beta:\beta<\alpha,n\in \omega\}\cup \bigcup_{m<\leq N}\TT^m_\alpha$$

Note first that by our inductive assumption, there are only countably many topological types among the trees in $P$. 

And for each $\beta<\omega_1$ there is an $f_\beta\in P^\omega$ So that each $T_\beta=T_{f_\beta}$ where $T_{f_\beta}$ is the tree obtained by gluing copies of $f_{\beta}(n)$ to the $n^{th}$ node of $B_\beta$. And $T_\beta\leq^\sharp T_\gamma$ implies that $f_\beta\leq^* f_\gamma$. In addition, if also $f_\gamma\leq^* f_\beta$ then $T_\beta$ is topologically equivalent to $T_\gamma$. And so, by Theorem~\ref{thm:paul} we can deduce that the chain $\{T_\beta:\beta<\omega_1\}$ is eventually constant.  This complete the proof.
\end{proof}

The following corollary is a direct consequence of the previous result and Lemma \ref{lem:fullbinary}.

\begin{corollary}\label{cor:splittingleq2} Any chain of trees where each node on every tree has splitting number $\leq 2$ is composed of at most $\omega_1$ topological types.
\end{corollary}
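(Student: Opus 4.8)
The plan is to split the chain according to the number of rays in each tree and then apply Lemma~\ref{lem:fullbinary} to dispose of the trees with uncountably many rays. Let $C$ be a chain of locally finite trees in which every node of every tree has splitting number at most $2$, and let $T_2$ denote the full binary tree. First I would record the dichotomy furnished by Lemma~\ref{lem:fullbinary}: each $T\in C$ either has countably many rays, or else has uncountably many branches and therefore satisfies $T_2\leq^{\sharp}T$. (Recall that for a locally finite tree there are only countably many finite paths, so "uncountably many branches'' coincides with "uncountably many rays''.)

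The key elementary observation is that the splitting hypothesis forces $T\leq^{\sharp}T_2$ for every $T\in C$. Indeed, since each node of $T$ has at most two children while each node of $T_2$ has exactly two children, one embeds $T$ as a subgraph of $T_2$ by a straightforward recursion on levels: map the root of $T$ to the root of $T_2$ and, at each node, inject its (at most two) children into the two available children of the image in $T_2$. Taking the trivial subdivision of $T$ this yields $T\leq^{\sharp}T_2$. Combining this with the previous paragraph, any $T\in C$ with uncountably many rays satisfies both $T\leq^{\sharp}T_2$ and $T_2\leq^{\sharp}T$, hence is topologically equivalent to $T_2$. Thus all trees in $C$ with uncountably many rays fall into the single topological type $[T_2]$.

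It then remains only to count the types of the trees in $C$ with countably many rays. These form a subchain $C'\subseteq C$ (any subset of a chain is a chain), and every member of $C'$ has at most countably many rays, so Theorem~\ref{thm:countablecase} applies directly and shows that $C'$ comprises at most $\omega_1$ topological types. Adding the at most one extra type $[T_2]$ contributed by the trees with uncountably many rays, I conclude that $C$ is composed of at most $\omega_1$ topological types, as required.

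Since the statement is flagged as a direct consequence of the two cited results, I do not expect a serious obstacle; the argument is essentially a reduction. The only point needing genuine care is the verification that the splitting-$\leq 2$ condition yields $T\leq^{\sharp}T_2$, which is exactly what lets Lemma~\ref{lem:fullbinary} upgrade "uncountably many rays'' to topological equivalence with $T_2$. Everything else is bookkeeping: observing that $C'$ is itself a chain so that Theorem~\ref{thm:countablecase} is applicable, and noting that collapsing the uncountable-ray trees to a single class contributes only one additional type.
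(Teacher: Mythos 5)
Your proof is correct and follows exactly the route the paper intends: the paper gives no written argument, stating only that the corollary is a direct consequence of Theorem~\ref{thm:countablecase} and Lemma~\ref{lem:fullbinary}, and your decomposition into the countable-ray subchain (handled by the theorem) and the uncountable-ray trees (all collapsing to the single type $[T_2]$ via the splitting bound and the lemma) is precisely that consequence spelled out. No issues.
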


\subsection{Trees with uncountably many rays}\label{sec:uncountable} The answer to Question 1 for chains of trees with uncountably many branches remains open, however, in this section we provide several partial answers. Note, that the previous corollary establishes that chains of binary trees have order type at most $\omega_1+1$ and we begin this section by investigating the length of chains of trees with finitely many nodes of splitting number $\geq 3$. Let us denote by $T_2$ the full binary tree and $T_3$ the full ternary tree, i.e., the infinite tree with all nodes of splitting number $3$.

\begin{lemma}\label{lem:finitebranching3} Let $C$ be a chain of topological types of trees. If there exists a tree $ T \leq^{\sharp} T_3$ with at most finitely many nodes of splitting number $3$ so that $S \leq^{\sharp} T$ for all $S \in C$, then $|C| \leq \omega_1$.
\end{lemma}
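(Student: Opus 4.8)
The plan is to reduce the whole problem to the binary case already settled in Corollary~\ref{cor:splittingleq2}, by stripping away the finitely many nodes of splitting number $3$. First I would record the two structural consequences of the hypotheses. Since $S\leq^{\sharp}T\leq^{\sharp}T_3$ for every $S\in C$, each $S$ in fact satisfies $S\leq^{\sharp}T_3$, so every tree in $C$ has all splitting numbers $\leq 3$. Moreover a topological embedding is injective on vertices and sends a node of splitting number $3$ to a node of splitting number $\geq 3$, and since $T\leq^{\sharp}T_3$ the nodes of splitting number $\geq 3$ in $T$ are exactly its finitely many nodes of splitting number $3$. Hence the number of splitting-$3$ nodes is monotone along $\leq^{\sharp}$ and is uniformly bounded on $C$ by the number $k$ of splitting-$3$ nodes of $T$. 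It therefore suffices to prove the self-contained statement: \emph{any chain of trees all of whose splitting numbers are $\leq 3$ and which have at most $k$ nodes of splitting number $3$ consists of at most $\omega_1$ topological types.}

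Next I would attach to each such tree $S$ its \emph{ternary skeleton}, namely the finite set of its splitting-$3$ nodes together with their induced order, which is a finite rooted tree $\sigma_S$ on at most $k$ nodes. Deleting the skeleton decomposes $S$ into finitely many \emph{binary pieces}: the binary connectors running between consecutive skeleton nodes (a path carrying binary side-subtrees) and the binary subtrees hanging on the directions at skeleton nodes not used by the skeleton. Since there are only finitely many possible skeleton shapes on at most $k$ nodes, I partition $C$ into finitely many subchains according to $\sigma_S$; a finite union being harmless, it is enough to bound the number of types in one such subchain, so I fix a skeleton shape $\sigma$ and assume every member of the (well-ordered) chain has skeleton $\sigma$.

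The key technical step, mirroring the coherent choice of rays in the proof of Theorem~\ref{thm:countablecase}, is to choose markings $\mu_\beta$ identifying each $\sigma_{S_\beta}$ with $\sigma$ --- including a labelling of the three directions at every skeleton node --- coherently along the chain: starting from an arbitrary $\mu_0$, transporting the marking across the fixed embedding at successor stages, and at a limit stage $\gamma$ selecting the marking of $S_\gamma$ obtained from cofinally many $\beta<\gamma$ (possible since there are only finitely many markings, the skeleton-automorphism group being finite). Exactly as in that proof one then checks by induction that for all $\beta<\gamma$ there is an embedding witnessing $S_\beta\leq^{\sharp}S_\gamma$ that respects the markings, i.e.\ carries skeleton to skeleton preserving the labelled directions. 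With coherent markings in hand, such an embedding carries the $i$-th binary piece of $S_\beta$ into the $i$-th binary piece of $S_\gamma$ (tree-paths and the subtrees above a node being preserved), so for each fixed slot $i$ the pieces $\{\,\mathrm{piece}_i(S_\beta):\beta\,\}$ form a chain of binary trees; by Corollary~\ref{cor:splittingleq2} each slot realizes at most $\omega_1$ topological types, and there are only finitely many slots. Finally I would note that the topological type of $S$ is determined by $\sigma$ together with the tuple of types of its binary pieces: given two members with the same skeleton and slotwise-equivalent pieces, one assembles mutual embeddings from the piecewise embeddings, matched at the skeleton via the markings. Hence the fixed subchain has at most $\omega_1^{\,m}=\omega_1$ types, where $m$ is the finite number of slots, and summing over the finitely many skeleton shapes gives $|C|\leq\omega_1$.

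I expect the main obstacle to be the coherent marking step together with the verification that the piece decomposition is respected by the marking-preserving embeddings --- that pieces genuinely map into the corresponding pieces, and that the assembly of a tree from its skeleton and pieces is functorial with respect to topological equivalence. These are the precise analogues of the coherence lemma used for $\TT^N_\alpha$ in Theorem~\ref{thm:countablecase}, and it is the finiteness of both the set of skeleton shapes and the skeleton-automorphism groups --- guaranteed by the hypothesis that $T$ has only finitely many nodes of splitting number $3$ --- that makes the argument go through.
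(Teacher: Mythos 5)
Your proposal is correct and follows essentially the same route as the paper's proof: the paper treats the case of a single splitting-$3$ node at the root explicitly --- coherently selecting, via the chosen embeddings (with a pigeonhole choice among the finitely many candidates at limit stages), a chain of binary ``leaves'' in each of the three directions and applying Corollary~\ref{cor:splittingleq2} to each --- and then disposes of the general case by moving the special node and inducting on the number of splitting-$3$ nodes. Your skeleton decomposition with coherent markings is just the explicit general form of that argument, spelled out rather more carefully than the paper's ``simple induction does the rest.''
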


\begin{proof} Let $T$ be the tree that results from attaching an extra copy of $T_2$ to the root of $T_2$ itself; all vertices of $T$ have splitting number 2 with the exception of its root, which has a splitting number of 3. Let $C$ be a chain whose top element is $T$. We want from $C$ the upper part composed of trees whose root splitting number is 3 since, as we have shown in Corollary~\ref{cor:splittingleq2}, the bottom part composed of trees with all nodes of order 2 must have a length of $\omega_1$. Denote this new chain by $D$ and let $(S_\alpha:\alpha\in \kappa)$ be its increasing enumeration. Since the root of $T$ and all other trees in the chain $D$ have their root as a special vertex, we will denote the full trees that stem off their roots (i.e., the trees whose root is their ambient trees' root, but only one edge stems from said root) as {\it leaves}. That is, each one of these trees has three leaves stemming from their roots, and each such leaf embeds into $T_2$.

Next, start with the bottom element in $D$, $S_1$, and let $f_1: S_1 \to S_2$ be any one that witnesses $S_1\leq^{\sharp} S_2$. Choose and fix a leaf from $S_1$, call it $L_1$, and let $L_2$ be the leaf in $T_2$ containing $f_1(L_1)$. Do the same for all $\alpha \in \omega$. For each $i \in \omega$, fix an $f_i^\omega: S_i \to S_\omega$ witnessing $S_i \leq^{\sharp} S_\omega$. Choose a leaf from $S_\omega$ for which there exists an infinite set $X \subseteq \{L_n \mid n\in \omega\}$ so that all leaves in $X$ map into it by the $f_i^\omega$'s. It is clear how to proceed for each $\alpha\in |C| = \kappa$: let $f_\alpha :S_\alpha \to S_{\alpha+1}$ be any embedding witnessing $S_\alpha \leq^{\sharp} S_{\alpha+1}$, etc. For the limit case, we use the omega case as a guide. Choose a leaf from $T$ that embeds the chain of leaves we've just constructed. This chain plateaus at stage $\omega_1$. Using the same $f_i$'s and $f_i^\gamma$'s as with $L_1$, repeat the process for the two remaining leaves of $S_1$. We now have three chains of trees that embed into $T_2$ and, thus, they must plateau at stage $\omega_1$. Hence, the leaves of the chain $D$ itself must plateau after $\omega_1$ trees and the chain itself also.

The special node with degree three can be moved around (i.e., it need not be the root of $T$) and yield the same result for the case of $T$ having only one node of branching number $3$. Simple induction does the rest.
\end{proof}

From the proof of Lemma~\ref{lem:finitebranching3} the following is evident.

\begin{corollary} Let $C$ be a chain of locally finite trees. If there exists a tree $T$ with at most finitely many nodes of splitting number $\geq 3$ so that $S \leq^{\sharp} T$ for all $S \in C$, then $C$ is be composed of at most $\omega_1$ topological types.
\end{corollary}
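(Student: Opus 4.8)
The plan is to adapt the proof of Lemma~\ref{lem:finitebranching3} with only cosmetic changes. Two features are new: the distinguished nodes now have splitting number $\geq 3$ rather than exactly $3$, and we no longer assume $T\leq^\sharp T_3$. Neither is essential. A node of splitting number $d\geq 3$ simply produces $d$ rather than three binary ``leaves'', and finiteness of $d$ is all the original argument ever used. The hypothesis $T\leq^\sharp T_3$ was only a convenient way to guarantee that the non-distinguished part of $T$ is binary; once the hypothesis is phrased as ``$T$ has at most finitely many nodes of splitting number $\geq 3$'', this becomes automatic, since every node outside that finite set has splitting number $\leq 2$.

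First I would reduce to the case that $T$ is the top element of $C$ (adjoining $T$ to the chain changes the number of topological types by at most one, so bounding $C\cup\{T\}$ suffices). Let $v_1,\dots,v_m$ be the finitely many nodes of $T$ of splitting number $\geq 3$. Because $T$ is locally finite, $\{v_1,\dots,v_m\}$ together with the root spans a finite subtree, and deleting the $v_i$ decomposes $T$ into finitely many connected pieces, each of which has all nodes of splitting number $\leq 2$ and hence embeds into $T_2$. Since every $S\in C$ satisfies $S\leq^\sharp T$, a subdivision of $S$ sits inside $T$ and can branch into three or more directions only at the $v_i$; thus each $S$ inherits the same coarse branching skeleton, with at most $m$ branching nodes, and decomposes into the corresponding finitely many binary pieces.

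Next, exactly as in Lemma~\ref{lem:finitebranching3} -- and using the cohesive selection of embeddings along the chain carried out in the proof of Theorem~\ref{thm:countablecase} to keep the decomposition coherent through limit stages -- I would arrange, going up the chain, that the embeddings respect this decomposition, so that each of the finitely many binary pieces generates a chain of trees all embedding into $T_2$. By Corollary~\ref{cor:splittingleq2}, each such chain of binary trees realizes at most $\omega_1$ topological types, and hence plateaus by stage $\omega_1$. Since there are only finitely many pieces and the skeleton joining them is a fixed finite tree, once every piece has plateaued the assembled tree has plateaued as well; therefore $C$ is composed of at most $\omega_1$ topological types. The passage from a single distinguished node to finitely many is then the ``simple induction'' on the number $m$ of branching nodes noted at the end of the lemma.

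The main obstacle is the same one already present in Lemma~\ref{lem:finitebranching3}: guaranteeing that the chosen embeddings can be taken \emph{coherently} along the whole chain, and in particular through limit stages, so that the decomposition into binary pieces really does define an honest chain in each coordinate rather than pieces that drift. This is handled precisely by the recursive, cofinality-based selection of embeddings used in the proof of Theorem~\ref{thm:countablecase}; with several branching nodes one must additionally track the finite skeleton connecting them and ensure the embeddings preserve it up to the branching pattern, which is exactly what the induction on $m$ accomplishes.
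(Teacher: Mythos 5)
Your proposal is correct and takes essentially the same route as the paper, which derives this corollary directly from the proof of Lemma~\ref{lem:finitebranching3}: decompose each tree at the finitely many high-splitting nodes into binary pieces, run coherently chosen chains of ``leaves'' each of which plateaus at $\omega_1$ by Corollary~\ref{cor:splittingleq2}, and induct on the number of such nodes. The adjustments you spell out (splitting number $d\geq 3$ giving $d$ leaves, and dropping the hypothesis $T\leq^\sharp T_3$) are exactly the cosmetic changes the paper leaves implicit.
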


When there is no finite bound to the number of nodes with splitting number $\geq 3$ in a chain of locally finite trees the scenario becomes more complex. Consider, for example, the case of a chain $C$ for which there exists a tree $T$ so that $S \leq^{\sharp} T$ for all $S \in C$, and where $T$ has infinitely many nodes of branching number $\geq 3$ and all such nodes belong to the same ray of $T$. For simplicity, let us assume that each tree $S$ also has an infinite number of nodes of splitting number greater than 2. A moment's thought verifies that each $S \in C$ must have a special ray to which all nodes of degree $\geq 3$ must belong. This creates a scenario much like the one encountered in Theorem~\ref{thm:countablecase}; indeed, each $S\in C$ can be identified with a function
\[
\omega \to \{T\mid T\leq T_2\}.
\]
Consequently, we can employ the ideas in the proof of said theorem in conjunction with Corollary~\ref{cor:splittingleq2} to conclude that $C$ is composed of at most $\omega_1$ topological types of trees. That said, an infinite number of nodes need not span a ray.

\begin{lemma} Any locally finite tree with infinitely many nodes contains a ray. Moreover, if there exists an infinite antichain then the tree contains a copy of the {\it 1-comb} (see Figure~\ref{fig:finitecomb}).
\end{lemma}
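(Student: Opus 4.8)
The plan is to prove the two assertions separately: the first is an immediate instance of König's Lemma, while the second requires choosing the spine of the comb carefully so that its teeth are forced to occur at infinitely many \emph{distinct} nodes. For the first assertion I would argue exactly as in König's Lemma. Since $T$ is rooted, locally finite and has infinitely many nodes, start at the root; it has only finitely many children while the whole tree is infinite, so some child roots an infinite subtree. Passing to that child and iterating produces an infinite strictly $\leq_T$-increasing sequence of nodes, that is, a ray $B$.

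For the second assertion, fix an infinite antichain $A=\{a_i:i\in\omega\}$ and pass to the subtree where the antichain is ``heavy''. Let
\[
D=\{t\in T : t\leq_T a_i \text{ for infinitely many } i\}.
\]
This $D$ is downward closed and contains the root. Because $T$ is locally finite, whenever $t\in D$ all but at most one of the infinitely many $a_i$ lying above $t$ lie \emph{strictly} above $t$ (at most one can equal $t$, as $A$ is an antichain), and these are distributed among the finitely many children of $t$; hence some child of $t$ again lies in $D$. Thus $D$ has no maximal element, and the König argument applied inside $D$ yields a ray $B=(b_0<_T b_1<_T\cdots)$ with all $b_n\in D$ and $b_0$ the root.

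The crux of the matter, and the step I expect to be the \emph{main obstacle}, is to guarantee that the antichain forces branching off $B$ at infinitely many distinct spine nodes, rather than all the teeth clustering at finitely many nodes; this is precisely what the choice of $D$ buys us. First note that no $a_i$ lies on $B$: if $a_i=b_m$, then since $b_{m+1}\in D$ there is some $a_j$ with $b_{m+1}\leq_T a_j$, whence $a_i=b_m<_T a_j$, contradicting that $A$ is an antichain. Consequently, for each $a_i$ there is a largest index $d(i)$ with $b_{d(i)}\leq_T a_i$, and the path from $b_{d(i)}$ to $a_i$ leaves $B$ through a child of $b_{d(i)}$ other than $b_{d(i)+1}$. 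Since every $b_n\in D$ has infinitely many $a_i$ strictly above it, we have $d(i)\geq n$ for infinitely many $i$, for every $n$; thus $d$ is unbounded. Choosing indices $i_0,i_1,\dots$ with $d(i_0)<d(i_1)<\cdots$ yields, at the distinct spine nodes $b_{d(i_k)}$, pairwise disjoint finite teeth running off the spine (disjointness because the tooth attached at $b_{d(i_k)}$ lives above an off-spine child of $b_{d(i_k)}$, which is incomparable to $b_{d(i_k)+1}\leq_T b_{d(i_l)}$ for $l>k$). The union of $B$ with these teeth is a subdivided $1$-comb occurring as a subgraph of $T$, so $\text{1-comb}\leq^\sharp T$, as required.
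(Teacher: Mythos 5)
Your proof is correct. The paper states this lemma without proof (it is the locally finite case of the standard König's lemma argument together with the star--comb lemma of Diestel, where local finiteness rules out the star), so there is nothing to compare against; your argument via the downward-closed set $D$ of nodes lying below infinitely many antichain elements is exactly the standard one, and the key step --- showing the departure indices $d(i)$ are unbounded so that the teeth attach at infinitely many distinct spine nodes --- is handled correctly.
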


\begin{figure}[h]
\centering
\begin{tikzpicture}
  [scale=.8,auto=left,every node/.style={scale = 0.4, circle,fill=black!100}]
  \node (n1) at (1,10){};
  \node (n2) at (0,11) {};
  \node (n3) at (-1,12){};
  \node (n4) at (-2,13){};
  \node (n5) at (-3,14){};
  \node (n6) at (2,11){};
  \node (n7) at (1,12){};
  \node (n8) at (0,13){};
  \node (n9) at (-1,14){};
  \node (n10) at (-2,15){};
  \node (n11) at (-4,15){};
  \draw (-3.2,16) node[scale= 2.5, fill=white,inner sep=2pt]{$\vdots$};

   \node (w1) at (4.5,10){};
  \node (w3) at (3.9,12){};
  \node (w8) at (5.1,12) {};
    \draw (4.5,12) node[scale= 2.5, fill=white,inner sep=2pt]{$\ldots$};
    \draw (4.5,13.3) node[scale= 3.7, fill=white,inner sep=2pt]{$\overbrace{}^{n \text{ nodes}}$};

  \foreach \from/\to in {n1/n5,n1/n6,n2/n7,n3/n8, n4/n9,n5/n10, n5/n11,w1/w3,w1/w8}
    \draw (\from) -- (\to);

\end{tikzpicture}
\caption{The $1$-comb and $V_n$. \label{fig:finitecomb}}
\end{figure}
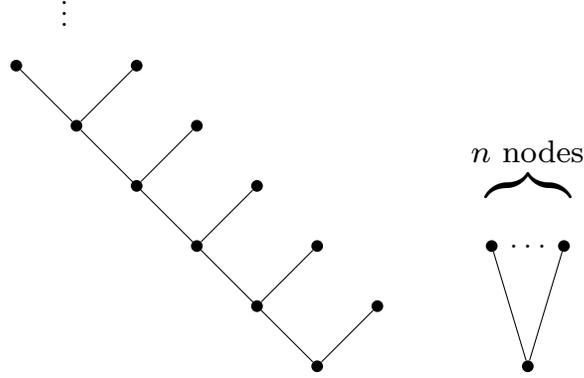

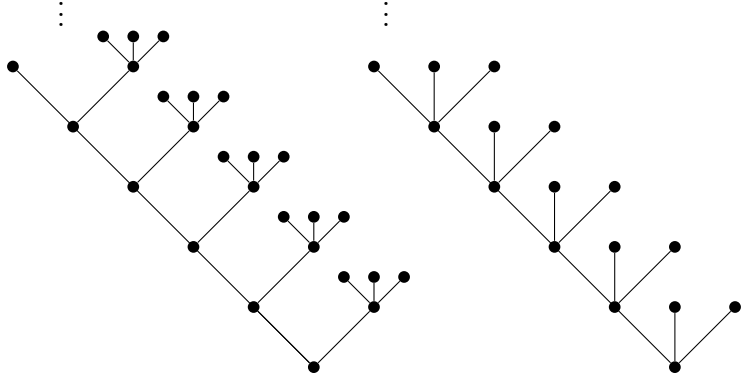
\begin{figure}[h]
\centering
\begin{tikzpicture}
  [scale=.8,auto=left,every node/.style={scale = 0.4, circle,fill=black!100}]
  \node (n1) at (1,10){};
  \node (n2) at (2,11){};
  \node (n3) at (2.5,11.5){};
  \node (n4) at (2,11.5){};
  \node (n5) at (1.5,11.5){};
  
  \node (n6) at (0,11){};
  \node (n7) at (1,12){};
  \node (n8) at (1.5,12.5){};
  \node (n9) at (1,12.5){};
  \node (n10) at (0.5,12.5){};
    
  \node (n11) at (-1,12){};
  \node (n12) at (0,13){};
  \node (n13) at (0.5,13.5){};
  \node (n14) at (0,13.5){};
  \node (n15) at (-0.5,13.5){};
    
  \node (n16) at (-2,13){};
  \node (n17) at (-1,14){};
  \node (n18) at (-0.5,14.5){};
  \node (n19) at (-1,14.5){};
  \node (n20) at (-1.5,14.5){};

  \node (n21) at (-3,14){};
  \node (n22) at (-2,15){};
  \node (n23) at (-1.5,15.5){};
  \node (n24) at (-2,15.5){};
  \node (n25) at (-2.5,15.5){};
 
  \node (n26) at (-4,15){};
  
  \draw (-3.2,16) node[scale= 2.5, fill=white,inner sep=2pt]{$\vdots$};
  
   \node (d1) at (7,10){};
  \node (d2) at (7,11){};
  \node (d3) at (8,11){};

  \node (d4) at (6,11) {};
  \node (d5) at (7,12){};
  \node (d6) at (6,12){};
 
  \node (d7) at (5,12){};
  \node (d8) at (6,13){};
  \node (d9) at (5,13){};
   
  \node (d10) at (4,13){};
  \node (d11) at (5,14){};
  \node (d12) at (4,14){};
  
  \node (d13) at (3,14){};
  \node (d14) at (4,15){};
  \node (d15) at (3,15){};
  
  \node (d16) at (2,15){};

  \draw (2.2,16) node[scale= 2.5, fill=white,inner sep=2pt]{$\vdots$};
     
  \foreach \from/\to in {n1/n6, n1/n2, n2/n3, n2/n4, n2/n5, n1/n26, n6/n7, n7/n8,n7/n9, n7/n10, n11/n12, n12/n13,n12/n14, n12/n15, n16/n17, n17/n18,n17/n19, n17/n20,  n21/n22, n22/n23, n22/n24, n22/n25, d1/d2, d1/d3, d4/d5, d4/d6, d7/d8, d7/d9, d10/d11, d10/d12, d13/d14, d13/d15, d1/d16}
    \draw (\from) -- (\to);

\end{tikzpicture}
\caption{The hairy $3$-comb and the $2$-comb. \label{fig:hairycomb}}
\end{figure}

 For every $n\in \N$, we let $V_n$ be the tree depicted in Figure~\ref{fig:finitecomb}. Denote the tree that result from gluing copies of $V_2$ to every vertex of the ray, the {\bf 2-comb} and the trees that result from attaching a copy of $V_3$ to every node of the ray the {\bf hairy 3-comb} (see Figure~\ref{fig:hairycomb}). Given a tree $T $ let $T^\star_3$ be the subtree of $T$ composed of only the rays of $T$ with the property that for every such ray $B$ there exists an embedding of either the hairy $3$-comb or the $2$-comb into $T$, mapping the only ray from either comb entirely within $B$. Loosely speaking, the rays of $T^\star_3$ trace out all copies of either comb within $T$.

 \begin{lemma} Any chain $C$ of locally finite trees with the property that for every $T\in C$, $T\leq^\sharp T_3$ and $o(T_3^\star) < \omega_1$ is composed of at most $\omega_1$ topological types.
 \end{lemma}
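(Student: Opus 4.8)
The plan is to transpose the inductive machinery of Theorem~\ref{thm:countablecase} from the order $o(T)$ to the comb-order $o(T^\star_3)$. The hypothesis $o(T^\star_3)<\omega_1$ is exactly what makes this possible: it forces the comb-skeleton $T^\star_3$ to have countably many rays, so that $o(T^\star_3)$ and the nonempty finite set $\BB(T^\star_3)$ of top-order rays are available through Lemma~\ref{lem:BB}. (This is precisely the hypothesis unavailable for $T_3$ itself, which is why that case remains open.) I first record the monotonicity that drives everything: if $S\leq^\sharp T$ then any witnessing embedding carries each comb of $S$ to a comb of $T$, whence $S^\star_3\leq^\sharp T^\star_3$ and $o(S^\star_3)\le o(T^\star_3)$.

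For $\gamma<\omega_1$ and $n\in\omega$ set
\[
\mathcal U^n_\gamma=\{T: T\leq^\sharp T_3,\ o(T^\star_3)\le \gamma,\ |\BB(T^\star_3)|\le n\}.
\]
I would prove by induction on $\gamma$ (outer) and $n$ (inner) that each $\mathcal U^n_\gamma$ carries at most $\omega_1$ topological types. Granting this, since every member of the chain $C$ lies in some $\mathcal U^n_\gamma$ with $\gamma<\omega_1$, the chain meets at most $\omega_1\cdot\omega\cdot\omega_1=\omega_1$ types, which is the assertion of the lemma. As in Theorem~\ref{thm:countablecase}, by Kurepa's theorem the bound on the number of types in a class is equivalent to the nonexistence of a strictly increasing $\omega_2$-chain in it, so it suffices to control chains through Corollary~\ref{cor:paulgeneral} with $\kappa=\omega_1$.

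The base of the induction is the no-comb case $o(T^\star_3)=0$, i.e.\ $T^\star_3=\emptyset$: no ray of $T$ carries a $2$-comb or a hairy $3$-comb. Here the preceding structural lemmas (that an infinite antichain produces a comb, together with Lemma~\ref{lem:finitebranching3} and its corollary) confine the splitting-$3$ nodes of $T$ to a finitely branching core, so that modulo a finite ternary part $T$ is governed by its binary skeleton; Corollary~\ref{cor:splittingleq2} and the finite-branching corollary then give at most $\omega_1$ types. For the inductive step I would, exactly as in the proof of Theorem~\ref{thm:countablecase}, cohesively select along the chain one top-order comb-ray $B_T\in\BB(T^\star_3)$ so that for all $\beta<\delta$ some embedding of a subdivision of $T_\beta$ into $T_\delta$ maps $B_{T_\beta}$ onto $B_{T_\delta}$. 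Each $T$ in the chain is then identified with the function $f_T\in P^\omega$ recording the trees glued along $B_T$, where
\[
P=\{U:U\leq^\sharp T_3,\ o(U^\star_3)<\gamma\}\cup\{U:o(U^\star_3)\le\gamma,\ |\BB(U^\star_3)|<n\}\cup\{U:U\leq^\sharp T_2\};
\]
maximality of $B_T$ forces every hanging tree into $P$ (an analogue of Lemma~\ref{lem:fitcombs}(3)), and one checks $S\leq^\sharp T$ iff $f_S\leq^* f_T$. By the inductive hypothesis and Corollary~\ref{cor:splittingleq2}, $P/\!\equiv$ has size at most $\omega_1$, so Corollary~\ref{cor:paulgeneral} (or Theorem~\ref{thm:paul} when $P/\!\equiv$ is countable) bounds the chain by $\omega_1$ types.

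The main obstacle is the faithful encoding $S\leq^\sharp T\iff f_S\leq^* f_T$ together with the cohesive choice of the rays $B_T$. Unlike the rigid spines $s(S_\alpha)$, combs can wander and several top-order comb-rays may interleave, so the delicate points are (i) the limit-stage coherence lemma that keeps $B_{T_\beta}$ tracking $B_{T_\delta}$ under composed embeddings, and (ii) verifying that cutting along the chosen ray genuinely drops complexity, i.e.\ that every tree hanging off $B_T$ has strictly smaller $o(\cdot^\star_3)$, strictly fewer top-order comb-rays, or is binary. I also expect the no-comb base case to require care, since $T^\star_3=\emptyset$ does not literally bound the number of splitting-$3$ nodes and must be reduced to the finite-branching regime by hand.
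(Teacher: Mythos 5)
Your proposal takes essentially the same route as the paper: record that $S\leq^\sharp T$ implies $S_3^\star\leq^\sharp T_3^\star$, and then transplant the double induction of Theorem~\ref{thm:countablecase} (on the order of the skeleton and the number of its top-order rays) to the trees $T_3^\star$, closing each stage with Corollary~\ref{cor:paulgeneral}. Your write-up is in fact considerably more explicit than the paper's two-sentence sketch, and the delicate points you flag --- the cohesive choice of comb-rays at limit stages, the faithfulness of the encoding $S\leq^\sharp T\iff f_S\leq^* f_T$, and the degenerate case $T_3^\star=\emptyset$ --- are precisely the steps the paper leaves to the reader.
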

 
 \begin{proof}
  One can readily verify that $S \leq^{\sharp} T$ implies $S_3^\star \leq^{\sharp} T_3^\star$. Next, apply the techniques developed in Section~\ref{sec:countable} to the trees $T_3^\star$.  That is, as a direct consequence of Corollary~\ref{cor:paulgeneral}, any chain of trees $C$ for which every tree $T$ has $o(T_3^\star) =1$ must plateau at height $\omega_1$. Therefore, applying the techniques from Theorem~\ref{thm:countablecase} for the case where $o(T_3^\star)>1$ proves the theorem.
\end{proof}
  
In much the same spirit as with the definitions of the hairy $3$-comb and $2$-comb, for every $n\in \N$ we define {\bf hairy $n$-comb} (resp. {\bf $n$-comb}) to be the tree that results from attaching a copy of $V_n$ (resp. gluing a copy of $V_{n-1}$) to every node of the ray as illustrated in Figure~\ref{fig:ncombs}. Similarly, we define {\bf hairy $\omega$-comb} (resp. {\bf $\omega$-comb}) to be the tree that results from attaching (resp. gluing) a copy of $V_n$ to the $n^{\text{th}}$ node of the ray as illustrated in Figure~\ref{fig:omegacombs}.

 Continuing along this vein, and in line with the definition of $T^\star_3$, for a tree $T$ and a fixed $\alpha\in \omega+1$ we define $T^\star_\alpha$ to be the subtree of $T$ composed of only the rays of $T$ with the property that for every such ray $B$ there exists an embedding of either the hairy $\alpha$-comb or the $\beta$-comb into $T$, mapping the only ray from either comb entirely within $B$ where if $\alpha < \omega$ then $\alpha = \beta+1$ and $\alpha = \beta$, otherwise. 
 
 \begin{lemma} For any tree $T$ and any $n\in \N$, $o(T_n^\star)\leq o(T_{n-1}^\star)$ and $o(T_\omega^\star)\leq o(T_n^\star)$. Moreover, if for all $n\in \N$, $o(T_n^\star) \geq 1$ then $T_\omega^\star \geq 1$.
 \end{lemma}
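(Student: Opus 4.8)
The plan is to handle the two displayed inequalities and the ``moreover'' clause separately: the inequalities reduce to an inclusion of subtrees together with the monotonicity of $o$, while the ``moreover'' clause is an application of König's lemma to a decreasing sequence of subtrees.

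For the inequalities I would first record the elementary monotonicity of the combs under $\leq^\sharp$. Since $V_{m}\leq^\sharp V_{m+1}$ (a star on $m$ leaves sits inside a star on $m+1$ leaves), deleting one leaf from the gadget attached (resp. glued) at every node exhibits the hairy $(n-1)$-comb as a ray-preserving subtree of the hairy $n$-comb, and likewise the $(n-2)$-comb inside the $(n-1)$-comb. Hence, if a ray $B$ of $T$ witnesses membership in $T^\star_n$, i.e. the hairy $n$-comb or the $(n-1)$-comb embeds into $T$ with its ray inside $B$, then composing with these ray-preserving embeddings shows the hairy $(n-1)$-comb or the $(n-2)$-comb also embeds with ray inside $B$, so $B$ witnesses membership in $T^\star_{n-1}$. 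Thus $T^\star_n\subseteq T^\star_{n-1}$ as subgraphs of $T$, and since $o$ is monotone with respect to $\leq^\sharp$ (immediate from its definition as a supremum, using transitivity of the topological minor relation), $o(T^\star_n)\leq o(T^\star_{n-1})$. The inequality $o(T^\star_\omega)\leq o(T^\star_n)$ is obtained the same way: the hairy $n$-comb embeds, ray onto ray, into the tail of the hairy $\omega$-comb beginning at its $n$-th node (where every attached gadget is a $V_k$ with $k\geq n$, hence contains $V_n$), and the $(n-1)$-comb likewise embeds into a tail of the $\omega$-comb; this gives $T^\star_\omega\subseteq T^\star_n$.

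For the ``moreover'' clause I would exploit this nesting. The hypothesis $o(T^\star_n)\geq 1$ for all $n$ says each $T^\star_n$ is infinite; since rays emanate from the root, each $T^\star_n$ is an infinite, locally finite, rooted subtree of $T$, and by the first inequality $T^\star_1\supseteq T^\star_2\supseteq\cdots$ is decreasing. I would then run the standard König argument on this sequence: call a node $x$ \emph{persistent} if $T^\star_n$ contains infinitely many descendants of $x$ for every $n$ (equivalently, for cofinally many $n$, using the nesting). The root is persistent, and if $x$ is persistent then, as $x$ has finitely many children and each $T^\star_n$ is a subtree through the root, some single child of $x$ must be persistent. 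This produces an infinite branch $B\subseteq\bigcap_n T^\star_n$.

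It remains to verify that $B$ witnesses $o(T^\star_\omega)\geq 1$, i.e. that $B$ supports an $\omega$-comb. Since $B\in T^\star_n$ for every $n$, for each $n$ either the hairy $n$-comb or the $(n-1)$-comb embeds with ray inside $B$; by pigeonhole one of these two cases holds for infinitely many $n$. In the first case an embedding of the hairy $n$-comb forces infinitely many nodes of $B$ to carry a pendant $V_n$ off $B$, so as $n$ runs over an infinite set the off-ray branching along $B$ is unbounded (and, as each node has finite off-ray branching, the witnessing positions tend to infinity); selecting positions with strictly increasing branching yields an embedding of the hairy $\omega$-comb with ray inside $B$, and the second case gives the $\omega$-comb identically. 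Either way $B\in T^\star_\omega$, so $T^\star_\omega$ is infinite and $o(T^\star_\omega)\geq 1$. I expect this last step to be the main obstacle: the finite combs supplied by the hypothesis live a priori on different rays, so the real content is the compactness (König) extraction of a single branch carrying all of them at once, combined with the observation that ``unboundedly large finite branching along a single branch'' is precisely what an $\omega$-comb demands.
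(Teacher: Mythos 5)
Your treatment of the two inequalities is exactly the paper's argument: the paper likewise observes that the smaller combs embed ray-preservingly into the larger ones, concludes that every ray of $T^\star_m$ is also a ray of $T^\star_n$ for $m>n$ (including $m=\omega$ via the tail of the $\omega$-combs), and then invokes monotonicity of $o$. For the ``moreover'' clause the paper offers only the phrase ``standard compactness argument,'' and your K\"onig-style extraction is the natural way to flesh that out; the skeleton (nested infinite, locally finite subtrees, a persistent branch $B$ in the intersection, then a diagonal selection of ever-larger teeth along $B$) is the right one.

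There is, however, one step that does not hold as written: from $B\subseteq\bigcap_n T^\star_n$ you infer that for each $n$ either the hairy $n$-comb or the $(n-1)$-comb embeds with its ray inside $B$. But $T^\star_n$ is the \emph{union} of the level-$n$ witnessing rays, and a branch all of whose nodes lie in that union need not itself be a witnessing ray: $B$ may only be a limit of witnessing rays $B_k$ that diverge from $B$ at levels $m_k\to\infty$, with all teeth of the witnessing combs located beyond the divergence points. In that situation $B$ can genuinely fail to lie in $T^\star_n$ --- for instance, if the $B_k$ are $(n-1)$-comb witnesses, the subtree hanging off $B$ at each divergence node only inherits a subdivided $V_{n-1}$ (the out-degree of that comb's ray nodes), so $B$ is merely a hairy $(n-1)$-comb witness and lands in $T^\star_{n-1}$ rather than $T^\star_n$. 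The repair is short and worth making explicit: since every node of $B$ lies on some level-$n$ witnessing ray (which agrees with $B$ up to that node), either some such ray has $B$ as a tail, or there are infinitely many distinct divergence nodes along $B$, and at each one the off-$B$ subtree contains the entire tail of a witnessing ray and hence a subdivided $V_{n-1}$. Either way, for every $n$ infinitely many nodes of $B$ carry a $V_{n-1}$ off $B$, and your diagonal selection then produces the hairy $\omega$-comb exactly as you describe. With that substitution the argument is complete.
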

 
 \begin{proof} For $m>n$, the hairy $n$-comb and $n$-comb embed into the hairy $m$-comb and $m$-comb, respectively. As direct consequence of this embeddability for any tree $T$, any ray in $T_m^\star$ must also be a ray in $T_n^\star$. The last statement follows from a standard compactness argument. 
 \end{proof}

 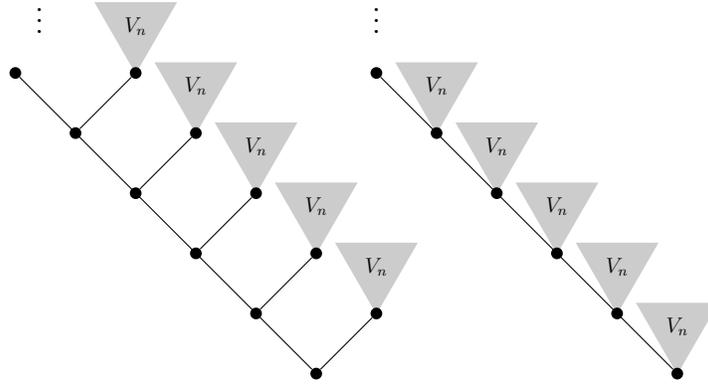
\begin{figure}[h]
\centering
\begin{tikzpicture}[
  triangle/.style = {fill=black!20, regular polygon, regular polygon sides=3 },
    node rotated/.style = {rotate=180},
    border rotated/.style = {shape border rotate=180}, scale=.8,auto=left,every node/.style={scale = 0.4, circle,fill=black!100}]
     \node[scale = 1.4, triangle, border rotated] at (2,11.78){\scalebox{1.2}{$V_{n}$}};
 \node (n1) at (1,10){};
  \node (n2) at (2,11){};

   \node[scale = 1.4, triangle, border rotated] at (1,12.78){\scalebox{1.2}{$V_{n}$}};
  \node (n6) at (0,11){};
  \node (n7) at (1,12){};

     \node[scale = 1.4, triangle, border rotated] at (0,13.78){\scalebox{1.2}{$V_{n}$}};
  \node (n11) at (-1,12){};
  \node (n12) at (0,13){};

     \node[scale = 1.4, triangle, border rotated] at (-1,14.78){\scalebox{1.2}{$V_{n}$}};
  \node (n16) at (-2,13){};
  \node (n17) at (-1,14){};

 \node[scale = 1.4, triangle, border rotated] at (-2,15.78){\scalebox{1.2}{$V_{n}$}};
  \node (n21) at (-3,14){};
  \node (n22) at (-2,15){};
 
  \node (n26) at (-4,15){};
  
  \draw (-3.6,16) node[scale= 2.5, fill=white,inner sep=2pt]{$\vdots$};
  
   \node[scale = 1.4, triangle, border rotated] at (7,10.78){\scalebox{1.2}{$V_{n}$}};
   \node (d1) at (7,10){};
  
  \node[scale = 1.4, triangle, border rotated] at (6,11.78){\scalebox{1.2}{$V_{n}$}};
  \node (d4) at (6,11) {};
 
  \node[scale = 1.4, triangle, border rotated] at (5,12.78){\scalebox{1.2}{$V_{n}$}};
  \node (d7) at (5,12){};
   
    \node[scale = 1.4, triangle, border rotated] at (4,13.78){\scalebox{1.2}{$V_{n}$}};
  \node (d10) at (4,13){};
  
   \node[scale = 1.4, triangle, border rotated] at (3,14.78){\scalebox{1.2}{$V_{n}$}};
  \node (d13) at (3,14){};
  
  \node (d16) at (2,15){};

  \draw (2,16) node[scale= 2.5, fill=white,inner sep=2pt]{$\vdots$};

    \foreach \from/\to in {n1/n2, n1/n26, n6/n7, n11/n12, n16/n17, n21/n22,  d1/d16}
    \draw (\from) -- (\to);

\end{tikzpicture}
\caption{The hairy $n$-comb and the $n$-comb. \label{fig:ncombs}}
\end{figure}

 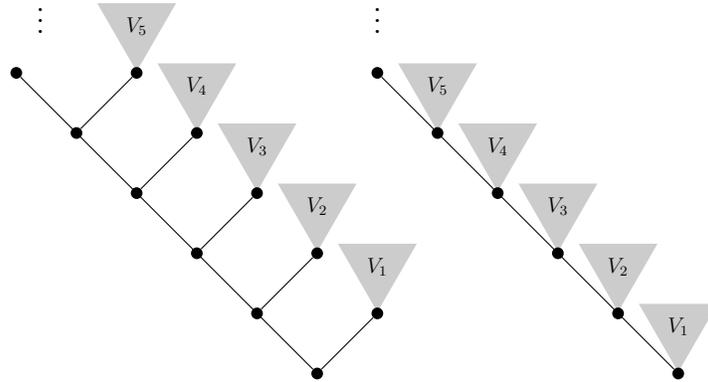
\begin{figure}[h]
\centering
\begin{tikzpicture}[
  triangle/.style = {fill=black!20, regular polygon, regular polygon sides=3 },
    node rotated/.style = {rotate=180},
    border rotated/.style = {shape border rotate=180}, scale=.8,auto=left,every node/.style={scale = 0.4, circle,fill=black!100}]
     \node[scale = 1.4, triangle, border rotated] at (2,11.78){\scalebox{1.2}{$V_{1}$}};
 \node (n1) at (1,10){};
  \node (n2) at (2,11){};

   \node[scale = 1.4, triangle, border rotated] at (1,12.78){\scalebox{1.2}{$V_{2}$}};
  \node (n6) at (0,11){};
  \node (n7) at (1,12){};

     \node[scale = 1.4, triangle, border rotated] at (0,13.78){\scalebox{1.2}{$V_{3}$}};
  \node (n11) at (-1,12){};
  \node (n12) at (0,13){};

     \node[scale = 1.4, triangle, border rotated] at (-1,14.78){\scalebox{1.2}{$V_{4}$}};
  \node (n16) at (-2,13){};
  \node (n17) at (-1,14){};

 \node[scale = 1.4, triangle, border rotated] at (-2,15.78){\scalebox{1.2}{$V_{5}$}};
  \node (n21) at (-3,14){};
  \node (n22) at (-2,15){};
 
  \node (n26) at (-4,15){};
  
  \draw (-3.6,16) node[scale= 2.5, fill=white,inner sep=2pt]{$\vdots$};
  
   \node[scale = 1.4, triangle, border rotated] at (7,10.78){\scalebox{1.2}{$V_{1}$}};
   \node (d1) at (7,10){};
  
  \node[scale = 1.4, triangle, border rotated] at (6,11.78){\scalebox{1.2}{$V_{2}$}};
  \node (d4) at (6,11) {};
 
  \node[scale = 1.4, triangle, border rotated] at (5,12.78){\scalebox{1.2}{$V_{3}$}};
  \node (d7) at (5,12){};
   
    \node[scale = 1.4, triangle, border rotated] at (4,13.78){\scalebox{1.2}{$V_{4}$}};
  \node (d10) at (4,13){};
  
   \node[scale = 1.4, triangle, border rotated] at (3,14.78){\scalebox{1.2}{$V_{5}$}};
  \node (d13) at (3,14){};
  
  \node (d16) at (2,15){};

  \draw (2.0,16) node[scale= 2.5, fill=white,inner sep=2pt]{$\vdots$};

    \foreach \from/\to in {n1/n2, n1/n26, n6/n7, n11/n12, n16/n17, n21/n22,  d1/d16}
    \draw (\from) -- (\to);

\end{tikzpicture}
\caption{The hairy $\omega$-comb and the $\omega$-comb. \label{fig:omegacombs}}
\end{figure}

\begin{corollary} Let $C$ be a chain of trees so that for all $T \in C$ and $\alpha\in \omega+1$, $o(T^\star_\alpha) < \omega_1$. Then $C$ is composed of at most $\omega_1$ topological types of locally finite trees.
\end{corollary}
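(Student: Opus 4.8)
The plan is to follow the template of the preceding lemma on $T^\star_3$, but to run it simultaneously over all comb-orders $\alpha\in\omega+1$. First I would record the monotonicity of the star operation: exactly as $S\leq^\sharp T$ gives $S^\star_3\leq^\sharp T^\star_3$, an embedding of a subdivision of $S$ into $T$ carries any ray of $S$ supporting a hairy $\alpha$-comb (or $\beta$-comb) to a ray of $T$ supporting the same comb, so $S\leq^\sharp T$ implies $S^\star_\alpha\leq^\sharp T^\star_\alpha$ for every $\alpha\in\omega+1$. The hypothesis $o(T^\star_\alpha)<\omega_1$ says precisely that each $T^\star_\alpha$ has countably many rays (by Lemma~\ref{lem:fullbinary}), hence lies in $\TT_{\omega_1}$, which is exactly the regime where Theorem~\ref{thm:countablecase} applies.

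By Kurepa's theorem it suffices to rule out a strictly increasing chain $\langle T_\beta:\beta<\omega_2\rangle$ in $C$; fix one for contradiction. For each fixed $\alpha\in\omega+1$, monotonicity makes $\langle (T_\beta)^\star_\alpha:\beta<\omega_2\rangle$ a weakly increasing chain inside $\TT_{\omega_1}$, so by Theorem~\ref{thm:countablecase} it realizes at most $\omega_1$ topological types. A weakly increasing $\omega_2$-sequence taking at most $\omega_1$ values in the well-founded order of types must be eventually constant, since $\mathrm{cf}(\omega_2)=\omega_2>\omega_1$ forbids covering $\omega_2$ by $\leq\omega_1$ bounded blocks; call the stabilization point $\gamma_\alpha<\omega_2$. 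As $\omega+1$ is countable and $\mathrm{cf}(\omega_2)>\omega$, the ordinal $\gamma=\sup_{\alpha\in\omega+1}\gamma_\alpha$ is still below $\omega_2$, and for every $\beta\geq\gamma$ and every $\alpha$ the type of $(T_\beta)^\star_\alpha$ is fixed.

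The crux is then to show that this stabilized star-data determines the topological type of $T_\beta$: concretely, that if $S\leq^\sharp T$ in $C$ and $S^\star_\alpha\equiv T^\star_\alpha$ for all $\alpha\in\omega+1$, then $T\leq^\sharp S$ as well. Applying this to $T_\gamma\leq^\sharp T_{\gamma+1}$ gives $T_\gamma\equiv T_{\gamma+1}$, contradicting strict increase and completing the proof. To establish the crux I would argue that the combs exhaust the branching of a locally finite tree: every ray of $T$ is either \emph{thin} (eventually of bounded splitting number $2$, contributing no comb and handled by Corollary~\ref{cor:splittingleq2}) or supports a hairy $\alpha$-comb/$\beta$-comb for some $\alpha\in\omega+1$, hence is recorded in $T^\star_\alpha$. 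One then assembles the family of embeddings $T^\star_\alpha\hookrightarrow S^\star_\alpha$ (available since the types agree) into a single embedding $T\hookrightarrow S$, exactly as Theorem~\ref{thm:countablecase} reconstructs a tree from its ray-indexed functions in $P^\omega$, now carried out across the countably many orders $\alpha\in\omega+1$.

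The main obstacle is this last reconstruction step. Unlike the $T^\star_3$ lemma, where a single ternary bound let one comb-order capture everything, here infinitely many comb-orders are in play at once, and one must verify both that the per-$\alpha$ embeddings can be chosen compatibly (so that they glue along shared rays rather than conflict) and that the comb-free thin remainder of $T$ genuinely adds no new topological type beyond what Corollary~\ref{cor:splittingleq2} already controls. I expect the cleanest route is to process the rays according to the comb-order they support, using the $P^\omega$ machinery of Theorem~\ref{thm:paul} (equivalently Theorem~\ref{thm:countablecase}) at each order and combining the results over $\alpha\in\omega+1$ via the inequalities $o(T^\star_\omega)\leq o(T^\star_n)\leq o(T^\star_{n-1})$ established in the preceding lemma.
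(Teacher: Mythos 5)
Your outer scaffolding is sound and close to what the paper intends: the monotonicity $S\leq^{\sharp} T\Rightarrow S^\star_\alpha\leq^{\sharp} T^\star_\alpha$, the observation via Lemma~\ref{lem:fullbinary} that $o(T^\star_\alpha)<\omega_1$ forces $T^\star_\alpha$ to have countably many rays, and the cofinality argument letting you stabilize over the countably many values of $\alpha$ simultaneously. But the step you yourself flag as the crux --- that $S\leq^{\sharp} T$ together with $S^\star_\alpha\equiv^{\sharp} T^\star_\alpha$ for all $\alpha\in\omega+1$ forces $T\leq^{\sharp} S$ --- is not merely unproven; it is false. The subtree $T^\star_\alpha$ is by definition only the union of those rays of $T$ that carry the relevant comb; it records nothing about the subtrees hanging off those rays, nor about the part of $T$ lying outside every $T^\star_\alpha$. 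For instance, if $S$ is a single bare ray and $T$ is that ray with one finite tree attached at a single node, then $S^\star_\alpha=T^\star_\alpha=\emptyset$ for every $\alpha$, $S\leq^{\sharp} T$, and $T\not\leq^{\sharp} S$. The same phenomenon occurs inside genuinely long chains: the star data can stabilize long before the chain does, so ``stabilized star-types'' cannot be the invariant that bounds the length of $C$.

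The argument actually needed (the one behind the $T^\star_3$ lemma and Theorem~\ref{thm:countablecase}) is different in kind: it does not attempt to recover the type of $T$ from the type of $T^\star_\alpha$, but uses the countably many rays of $T^\star_\alpha$ as spines along which $T$ itself is encoded as a function into a poset $P$ of lower-complexity trees having only countably many types, and then applies Theorem~\ref{thm:paul} and Corollary~\ref{cor:paulgeneral} to $P^\omega$, inducting on $o(T^\star_\alpha)$ and on $\alpha$. Your final paragraph gestures at exactly this, but the gesture is where the proof has to live: you still need (i) a cohesive choice of spines along the chain, as in the proof of Theorem~\ref{thm:countablecase}; (ii) the inductive hypothesis that the poset of decorations realizes only countably many types; and (iii) an argument that the comb-free remainder is absorbed into those decorations --- it is not ``handled by Corollary~\ref{cor:splittingleq2},'' which concerns trees all of whose nodes have splitting number at most $2$, not arbitrary comb-free pieces of a tree. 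As written, the proposal replaces the load-bearing induction with a false reduction, so it does not yet constitute a proof.
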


The above result is the strongest we are able to prove. Scenarios with chains where for some tree $T$ and $n\in \N$, $o(T^\star_\alpha) \geq \omega_1$ are unknown to the present authors. Even some of the simplest of cases remain elusive. For example, let $\CC$ denote the collection of trees forged as follows: take $T_2$ and subdivide each edge in two by adding a new node. To every new node (i.e., every node of splitting number 1) attach a finite tree.

\begin{question} What is the size of $[\CC]$?
\end{question}

 We finish this section by remarking that while we are able to construct chains longer than $\omega_1$, e.g., it is actually possible to construct a chain of inequivalent topological types of locally finite trees of order type $\sum_{n\in \omega+1} \omega_1^n$ by employing trees $T$ with $o(T_\alpha^\star) < \omega_1$, for all $i\in \omega+1$, we do not know the answer to the following questions:
 
 \begin{question} Is there, for each $\alpha<\omega_2$ a chain of inequivalent topological types of locally finite trees of order type $\alpha$?
\end{question}
\begin{question} Is it consistent that there is a chain of inequivalent topological types of locally finite trees of order type $\omega_2$ or longer (of course in the presence of the negation of the Continuum Hypothesis)?
\end{question}


\section{Size of equivalence classes} 
The Tree Alternative Conjecture concerns the sizes of equivalence classes with respect to mutual embeddable relation. Indeed, it conjectures that the number of isomorphism classes of trees mutually embeddable with a given tree T is either 1 or infinite. We consider a similar question for the relation of the mutual topological minor relation and denote $T\equiv^{\sharp} S$ if both $T\leq^{\sharp} S$ and $S\leq^{\sharp} T$. While in the previous part of this paper we were analyzing rooted trees, some of that analysis is helpful in proving the topological minor variant of the tree alternative conjecture: 

\begin{conjecture} For a given locally finite tree $T$, the number of isomorphism classes of trees that are mutually topological minor with $T$ is either $1$ or ${\mathfrak c}$. 
\end{conjecture}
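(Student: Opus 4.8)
The plan is to prove the dichotomy in the sharpened form ``$|[T]|\ge 2 \Rightarrow |[T]|=\mathfrak c$'', since the upper bound is automatic: a locally finite tree has only countably many vertices, and there are at most $\mathfrak c$ isomorphism types of countable graphs, so $|[T]|\le\mathfrak c$ always. Thus from a single witness $S\equiv^{\sharp}T$ with $S\not\cong T$ I want to manufacture a family $\{T_x:x\in 2^\omega\}\subseteq[T]$ that is pairwise non-isomorphic. I would first dispose of the finite case: if $T$ is finite then $T\leq^{\sharp}S\leq^{\sharp}T$ forces $|V(T)|=|V(S)|$, because a subdivision only adds vertices; hence both subdivisions are trivial and $T\cong S$, so $|[T]|=1$. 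Therefore I may assume $T$ is infinite and, by the lemma that every locally finite tree with infinitely many nodes contains a ray, fix a ray $B=b_0b_1\ldots$ in $T$. The tail subtrees $\{t:b_n\leq_T t\}$ form a $\leq^{\sharp}$-descending sequence, so by well-foundedness of the wqo they are eventually constant up to $\equiv^{\sharp}$; this furnishes the structural self-similarity — infinitely many interchangeable sites strung along a single ray — on which the amplification of one difference into continuum-many rests.

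For trees with countably many rays this can be made precise using the machinery of Section~\ref{sec:countable}. Represent $T$ through its finitely many critical rays $\BB(T)$ (finite by Lemma~\ref{lem:BB}) and the gluing function $f_T\in P^\omega$, where $P=\bigcup\{\TT^n_\beta:\beta<o(T),\,n\in\omega\}$ together with the top level; by the encoding established in the proof of Theorem~\ref{thm:countablecase}, $S\leq^{\sharp}T$ is equivalent to $f_S\leq^{*}f_T$, so $S\equiv^{\sharp}T$ corresponds exactly to $f_S\equiv f_T$ in $P^\omega$. Since a rooted ray is rigid, the isomorphism types appearing in $[T]$ correspond (modulo the automorphisms of the individual glued gadgets, and of the finitely many critical rays) to the functions $g\equiv f_T$ counted coordinatewise up to $P$-equivalence. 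In particular $|[T]|=1$ precisely when $f_T$ has a unique coordinatewise realization in its $\equiv$-class, and $|[T]|\ge 2$ produces a $g\equiv f_T$ differing from $f_T$ at infinitely many coordinates.

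The heart of the argument is then a combinatorial dichotomy for $P^\omega/\!\equiv$: the number of $g\equiv f_T$, counted coordinatewise, is either $1$ or $\ge\mathfrak c$. I would argue this by splitting on whether the range of $f_T$ is bounded. If it is bounded, the Corollary following Lemma~\ref{lem:BB}'s predecessors shows the range is finite, so some value $a$ occurs cofinally and is $\leq_P$-maximal; the existence of a non-isomorphic equivalent $g$ then forces some used value $b<_P a$, and for each $X\subseteq\omega$ infinite and co-infinite I set $g_X(n)=b$ on the chosen cofinal $a$-coordinates indexed by $X$ and leave $f_T$ unchanged elsewhere. Since only decreases below the cofinal maximum $a$ were made, each $g_X\equiv f_T$, so $T_{g_X}\in[T]$, while recovering $X$ from the isomorphism type gives $\mathfrak c$ distinct classes. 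If the range of $f_T$ is unbounded there is even more freedom along the cofinal increasing part, handled the same way.

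The step I expect to be the main obstacle is the passage beyond countably many rays, where none of the Section~\ref{sec:countable} apparatus — the order $o$, the finiteness of $\BB(T)$, the functional encoding — is available: the interchangeable sites need not lie along one ray or form any countable scheme, and one must simultaneously preserve membership in $[T]$ under infinitely many substitutions while guaranteeing the \emph{rigidity} that lets the bare isomorphism type encode the binary pattern, against the potentially large automorphism group of a tree with $\mathfrak c$ many branches. A secondary obstacle, already present in the countable case, is certifying that the $g_X$ yield genuinely pairwise non-isomorphic trees rather than merely $\leq^{*}$-equivalent ones; I would address this by selecting the sites so that the attached gadgets are separated by a local invariant (for instance the order $o$ of the gadget) that no automorphism can permute cofinally, thereby reading off $X$ up to a finite error and retaining $\mathfrak c$ many isomorphism types.
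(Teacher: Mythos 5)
First, a point of scope: the statement you are proving is stated as a \emph{conjecture} in the paper and is proved there only for trees with at most countably many infinite rays (Theorem~\ref{main}); your closing admission that the uncountable case is the main obstacle therefore matches the paper's own state of knowledge, and the comparison below concerns the countable-ray case. There your proposal diverges from the paper in its basic mechanism, and the divergence opens two genuine gaps. The paper does not argue ``one witness implies continuum many.'' It first isolates the trees with $|[T]|=1$ (Theorem~\ref{1class}: those of order $\leq 1$) and then, for $o(T)>1$, manufactures the continuum family by \emph{subdivision}, not by gadget substitution: it forms a canonical collapse $S$ of $T$ (proving $T\equiv^{\sharp}S$ by using the wqo property to push each attached subtree $T^k(i)$ forward into some later nonempty $T^k(m)$), and then takes the subdivisions $S_f$ in which the edge between consecutive surviving nodes of a maximal-order ray is replaced by a path of length $f(i)$. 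Non-isomorphism is then a concrete lemma: an isomorphism preserves ray orders and degree-$2$ vertices, so $S_f\cong S_g$ forces $\mathrm{ran}(f)$ and $\mathrm{ran}(g)$ to agree modulo finite sets, and continuum many ranges modulo finite finish the proof. This works uniformly for every $T$ of order $>1$, with no need for a pre-existing second member of $[T]$.

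Your route has two unclosed steps. (1) The reduction from ``$|[T]|\geq 2$'' to ``some $g\equiv f_T$ differs from $f_T$ at infinitely many coordinates, exhibiting a used value $b<_P a$ below a cofinal maximal value'' is asserted, not proved. A witness $S\equiv^{\sharp}T$ with $S\not\cong T$ can differ from $T$ only in the finite core, or at finitely many coordinates of the encoding, or coordinatewise only up to $P$-equivalence of the gadgets; none of these hands you the infinite substitution pattern your construction needs. Moreover, you never actually characterize when $|[T]|=1$, so even granting the amplification the dichotomy is not established. (2) The step you call a ``secondary obstacle'' --- certifying that the $T_{g_X}$ are pairwise non-isomorphic --- is in fact the heart of the matter, and ``separating the gadgets by a local invariant that no automorphism can permute cofinally'' is a restatement of what must be proved rather than a proof: you must show that any isomorphism carries the chosen maximal-order ray to the corresponding ray and eventually respects the indexing of the attachment sites, so that $X$ is recovered modulo finite shifts. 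The paper's range-modulo-finite invariant for subdivision lengths is exactly the worked-out analogue of this step; without a corresponding argument for your substitution scheme the proposal does not yet yield $\mathfrak{c}$ isomorphism classes.
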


We prove the conjecture for locally finite trees with at most countably many infinite rays. 

\begin{theorem}\label{main} Suppose that $T$ is a locally finite tree with at most countably many infinite rays, then the number of isomorphism classes of trees that are mutually topological minor with $T$ is either $1$ or ${\mathfrak c}$. 
\end{theorem}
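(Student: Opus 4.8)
The plan is to prove the dichotomy by showing that as soon as $[T]$ contains a tree not isomorphic to $T$, it must contain $\mathfrak{c}$ many pairwise non-isomorphic trees; since there are only $\mathfrak{c}$ locally finite trees up to isomorphism, this together with the trivial case gives the stated alternative. So I fix $T$ and suppose there is an $S$ with $S\equiv^{\sharp}T$ and $S\not\cong T$, and aim to conclude $|[T]|=\mathfrak{c}$.

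First I would put $T$ into the canonical form developed in Section~\ref{sec:countable}. Since $T\in\TT_{\omega_1}$, it has a well-defined order $o(T)=\alpha$ and a finite nonempty set $\BB(T)$ of distinguished rays by Lemma~\ref{lem:BB}. Proceeding by induction on $\alpha$ exactly as in the proof of Theorem~\ref{thm:countablecase}, I would fix a ray $B\in\BB(T)$ and identify $T$ with a function $f\in P^\omega$, where $P=\bigcup\{\TT^n_\beta:\beta<\alpha,\,n\in\omega\}\cup\bigcup_{m\le N}\TT^m_\alpha$ collects the lower-order (and lower-width) topological types, and $T=T_f$ is obtained by gluing the tree $f(n)$ to the $n^{\text{th}}$ node of $B$. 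The relevant point of this reduction is the order-$\alpha$ analogue of the basic equivalence of Section~2, namely $T_g\leq^{\sharp}T_f\iff g\leq^*f$, so that the siblings of $T$ correspond (up to a structural-rigidity argument) to the $\equiv$-class $\{g\in P^\omega:g\equiv f\}$, whereas isomorphism $T_g\cong T_h$ is a strictly finer relation that remembers the \emph{positions} of the nontrivial decorations along $B$.

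With this in hand the dichotomy becomes a statement about $f$. If only finitely many of the $f(n)$ are nontrivial and each such decoration is itself rigid (which, by the inductive hypothesis, is the only way to avoid the conclusion for the decorations individually), then I would argue that $T$ is rigid: because the topological minor relation sends branch vertices to branch vertices and can only \emph{lengthen} paths, the mutual relation $S\equiv^{\sharp}T$ forces the finite skeleton, the inter-branch distances, and each incompressible finite leg to be preserved exactly, so every sibling is isomorphic to $T$ and $|[T]|=1$. Otherwise either some decoration $f(n)$ is flexible, in which case varying it yields $\mathfrak{c}$ siblings by induction, or infinitely many $f(n)$ are nontrivial, in which case I build $\mathfrak{c}$ pairwise non-isomorphic siblings by \emph{respacing}: one keeps the same $\leq^*$-cofinal sequence of nontrivial decorations but relocates them to nodes of $B$ along a prescribed gap pattern. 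Any such respacing $g$ satisfies $g\equiv f$, since the increasing witnessing sequences go through in both directions once the ordered sequence of decorations is preserved, whence $T_g\equiv^{\sharp}T$; and distinct gap patterns give non-isomorphic trees. Coding the gap patterns by an independent (or almost disjoint) family of infinite subsets of $\omega$ then produces $\mathfrak{c}$ trees that stay pairwise non-isomorphic even after accounting for the finitely many automorphisms and tail-shifts available to a decorated one-ended ray.

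I expect the main obstacle to be twofold. The first difficulty is the \emph{structural} claim that every sibling of $T$ is, up to isomorphism, of the parametrized form $T_g$ glued along a ray of $\BB(T)$: this is where the order induction does the real work, since a priori a sibling could realize the same topological type with a differently-shaped carrier, and one must use the characterizations of Lemmas~\ref{lem:fitcombs}, \ref{lem:BB}, and~\ref{lem:fullbinary} to pin down the $\BB$-rays and their decorations. The second difficulty is the counting: obtaining \emph{continuum many} rather than merely infinitely many pairwise non-isomorphic respacings requires care, because $\equiv^{\sharp}$ is blind to isomorphism and a decorated ray admits various tail modifications; the cleanest resolution is an independent-family coding of the gap patterns together with a short argument, using local finiteness to bound the ambiguity at each node, that two codes yield isomorphic trees only when the codes eventually agree.
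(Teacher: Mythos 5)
Your route is correct in outline and, at its core, it is the paper's route: the paper fixes the finitely many rays $B_k$ of maximal order, uses wqo to truncate so that every attached subtree $T^k(i)$ recurs cofinally along its ray, collapses the undecorated stretches to single edges, and then for each $f\in\omega^\omega$ re-subdivides the edge between consecutive decorated nodes $b_k(i),b_k(i+1)$ into a path of length $f(i)$; non-isomorphism is certified by observing that an isomorphism must match the lengths of the degree-two paths between branch nodes, so $S_f\cong S_g$ forces $\ran(f)=^*\ran(g)$, and one finishes with continuum many functions whose ranges are pairwise distinct mod finite. Your ``respacing along AD-coded gap patterns'' is exactly this device, and your observation that a respacing is automatically $\equiv^\sharp$-equivalent to $T$ (being a subdivision in one direction and absorbable by a further subdivision in the other) is the paper's $S\leq^\sharp T\leq^\sharp S$ argument. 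Where you genuinely differ is the dichotomy: the paper splits on $o(T)\le 1$ versus $o(T)>1$, with Theorem~\ref{1class} supplying rigidity in the first case, whereas you split on whether a maximal-order ray carries infinitely many nontrivial decorations (versus finitely many, each rigid). Your criterion is the one that actually powers the construction, and it handles more gracefully a tree such as the $1$-comb, which has order $1$ yet carries infinitely many nonempty finite decorations and visibly admits non-isomorphic siblings; the bare order does not separate the cases as cleanly as decoration-counting does. Two smaller remarks: your self-identified ``first difficulty'' --- showing that every sibling of $T$ is of the parametrized form $T_g$ --- is not needed and the paper never proves it, since an upper bound of $\mathfrak c$ is free and the theorem only requires either exhibiting $\mathfrak c$ pairwise non-isomorphic members of $[T]$ or proving rigidity directly; and in your ``some decoration is flexible'' case you should note that an isomorphism of the ambient trees can permute the finitely many decoration sites, so a pigeonhole step is needed before concluding that $\mathfrak c$ choices of a sibling for $f(n)$ yield $\mathfrak c$ non-isomorphic ambient trees.
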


To cobble together a proof we need some observations. In all the discussions below ``tree'' means ``unrooted locally finite tree.'' 

In the earlier section where the order of a tree was defined, it was implicit that this was a property of rooted trees. However, the order of a tree is independent of which node is taken as the root:

\begin{lemma} Suppose $T$ is a tree with $r_0$ and $r_1$ are two nodes of $T$. Then the order of the rooted tree $(T,r_0)$ is the same as the order of $(T,r_1)$. Moreover, if $B$ is an infinite ray of $T$ with initial node $r_0$ and if $B'$ is a ray of $T$ with initial node $r_1$ and if either $r_0\in B'$ or $r_1\in B$ then the order of $B$ in $(T,r_0)$ is the same as the order of $B'$ in $(T,r_1)$,
\end{lemma}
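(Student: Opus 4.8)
The plan is to reduce the order to the ray-characterization of Lemma~\ref{lem:BB} and then observe that everything attached to a ray in that characterization is a \emph{tail invariant}, hence insensitive to the choice of root. Recall that by Lemma~\ref{lem:BB}, for a rooted tree $(T,r)$ we have $o(T,r)=\sup\{\,\sup\{\alpha\mid B\lhd\alpha\}\mid B\text{ a ray of }(T,r)\,\}$; the finite case, where $o(T,r)=0$ for every $r$ because finiteness does not depend on the root, is trivial, and by the earlier lemma an infinite locally finite tree always contains a ray. The whole point will be that for a fixed infinite simple path $B$ of $T$, the value $\sup\{\alpha\mid B\lhd\alpha\}$ is the same whether it is computed relative to $r_0$ or relative to $r_1$.

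First I would fix an infinite simple path $B=(b_0,b_1,b_2,\dots)$ in $T$ and an arbitrary node $r$. Since $T$ is a tree, the distances $d(r,b_n)$ are eventually strictly increasing, so there is $N$ with $b_N<_r b_{N+1}<_r\cdots$; for $n\ge N$ the parent of $b_n$ in $(T,r)$ is $b_{n-1}$. The key computation is then that, for $n\ge N$, the hanging subtree $B_n$ (the subtree attached at $b_n$ incompatible with the nodes of $B$ above $b_n$), together with $b_n$, is exactly the connected component of $b_n$ in $T$ after deleting the two edges $\{b_{n-1},b_n\}$ and $\{b_n,b_{n+1}\}$. This description mentions only the three consecutive path-nodes $b_{n-1},b_n,b_{n+1}$ and is \emph{manifestly independent of $r$}, as is the natural root $b_n$ of $B_n$. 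Consequently, if $N_0,N_1$ are the thresholds for the roots $r_0,r_1$, then for every $n\ge\max(N_0,N_1)$ the trees $B_n$ obtained from the two rootings coincide as rooted trees, and therefore $o(B_n)$ is literally the same ordinal in both cases.

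To finish the main assertion, observe that the relation $B\lhd\alpha$ asks only that for each $\beta<\alpha$ there be infinitely many $n$ with $o(B_n)\ge\beta$; since the witnessing indices $s(i)$ can always be taken beyond any fixed threshold, $B\lhd\alpha$ depends only on the tail sequence $(o(B_n))_{n\ge N}$. By the previous paragraph these tail sequences agree for $r_0$ and $r_1$, so $\sup\{\alpha\mid B\lhd\alpha\}$ is the same relative to either root, for every infinite path $B$. Finally, every ray of $(T,r_0)$ is, upon deleting a finite initial segment, a ray of $(T,r_1)$ and conversely (each infinite simple path is eventually monotone for each root), and the ray-order is unchanged by deleting such an initial segment; taking the supremum over rays and invoking Lemma~\ref{lem:BB} yields $o(T,r_0)=o(T,r_1)$.

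For the \emph{moreover} clause it suffices to note that, by the computation above, the ray-order $\sup\{\alpha\mid B\lhd\alpha\}$ is an invariant of the tail of $B$, equivalently of the end of $T$ that $B$ determines. The hypothesis that $r_0\in B'$ or $r_1\in B$ is exactly what forces $B$ and $B'$ to trace the same end: in the case $r_1\in B$ one takes $B'$ to be the sub-ray of $B$ issuing from $r_1$, which shares its entire tail with $B$, and symmetrically when $r_0\in B'$. Two rays sharing a tail have equal ray-order, which gives the claim. I expect the main obstacle to be the middle step: pinning down that the hanging subtrees $B_n$ along the tail are literally identical (same underlying tree, same root $b_n$) regardless of the global root, together with the bookkeeping that matches rays of the two rootings. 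Once this is in place, the ordinal computations and the end-identification in the \emph{moreover} clause are routine.
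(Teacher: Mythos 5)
The paper states this lemma without any proof (it is immediately followed by the remark that the order of an unrooted tree is therefore well defined), so there is no argument of the authors' to compare yours against; your write-up actually supplies the missing justification. Your argument is correct: reducing to the ray characterization of Lemma~\ref{lem:BB}, observing that for $n$ past the point where the path from the root meets the ray the hanging subtree $B_n$ is the component of $b_n$ in $T$ minus the edges $\{b_{n-1},b_n\}$ and $\{b_n,b_{n+1}\}$ (hence literally the same rooted tree for either rooting, with no induction on order needed), noting that $B\lhd\alpha$ is a tail property of $(o(B_n))_n$, and matching rays of the two rootings up to finite initial segments. Two small remarks. First, you silently read the quantifier in $B\lhd\alpha$ as ``infinitely many $n$ with $o(B_{n})\geq\beta$''; that is the only sensible reading of the paper's ``$\exists s\in\omega^\omega$'' and is needed for tail-invariance, but it is worth flagging as an interpretation. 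Second, the ``moreover'' clause as literally stated is false (with $r_0=r_1$ one could take $B$ and $B'$ to be rays of different orders issuing from the same node); your reading --- that the hypothesis is meant to force $B$ and $B'$ to share a tail, so that one is, up to a finite segment, a sub-ray of the other --- is the intended one, and under it your end-invariance argument closes the claim. Note also that if one takes the paper's opening definition of $\leq^{\sharp}$ at face value (subgraph embedding of a subdivision, with no root condition), the first assertion is immediate from $o(T)=\sup\{\alpha: S_\alpha\leq^{\sharp}T\}$; your route through Lemma~\ref{lem:BB} is the safe one in case root-preserving embeddings are intended, and it is in any case what the ``moreover'' clause requires.
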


Hence, the order of an unrooted tree and the order of an infinite ray in an unrooted tree are well defined by fixing any node in the tree (or in the ray) and calculating the order in the associated rooted tree. 

\begin{lemma} Suppose that $T$ and $S$ are trees, $G$ a subdivision of $T$ and $f:G\rightarrow S$ an embedding witnessing that $T\leq^{\sharp} S$. Suppose further that $B$ is an infinite ray of $T$ and that $f(B)$ is the infinite ray of $S$ induced by the image of $B$ in $S$. Then the order of $B$ is less than or equal the order of $f(B)$. Hence the order of $T$ is less than or equal the order of $S$. 
\end{lemma}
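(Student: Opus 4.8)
The plan is to prove the ray-level statement first, since the statement about $T$ and $S$ then follows at once. Throughout I would lean on the one fact that makes the whole order machinery run, namely that $o$ is monotone with respect to $\leq^{\sharp}$: if $X\leq^{\sharp}Y$ then every $S_\alpha\leq^{\sharp}X$ satisfies $S_\alpha\leq^{\sharp}Y$ by transitivity, so $o(X)\leq o(Y)$. In particular a subgraph inclusion $H\subseteq K$ gives $H\leq^{\sharp}K$ and hence $o(H)\leq o(K)$, and since the subdivision $G_n$ of the branch $B_n$ sitting inside $G$ contains a subdivision of $B_n$, we also get $o(B_n)\leq o(G_n)$.

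First I would set up the correspondence of branches. Root $T$ so that $B=(b_0,b_1,\dots)$ issues from the root; let $\tilde B$ be the ray of $G$ obtained by subdividing $B$, so that each $b_n$ occurs as a node of $\tilde B$, and write $G_n$ for the subdivision of $B_n$ hanging off $\tilde B$ at $b_n$. Applying $f$ and rooting $S$ at $f(b_0)$, the image $f(\tilde B)$ induces the ray $B'=f(B)$, and $f(G_n)$ is a subtree attached to $B'$ at the node $f(b_n)$. The key structural point — and the step I expect to be the main obstacle — is to verify that $f(G_n)$ really hangs off $B'$, i.e. that it is incompatible in $S$ with every node of $B'$ lying above $f(b_n)$. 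This is where one must exploit that $f$ is an embedding onto a subgraph: $f(G)$ is a connected subtree of the tree $S$, so for nodes of $f(G)$ the unique $S$-path between them lies inside $f(G)$, and therefore the ancestor relation of $S$ restricted to $f(G)$ coincides with the ancestor relation transported from $G$ by the isomorphism $f$. Since $G_n$ is incompatible in $G$ with the part of $\tilde B$ above $b_n$, its image $f(G_n)$ is incompatible in $S$ with the part of $B'$ above $f(b_n)$; hence $f(G_n)$ is contained in the branch $B'_{m(n)}$ of $S$ hanging off $f(b_n)$, where $m(n)$ is the position of $f(b_n)$ on $B'$. As $f$ is injective and order preserving along the ray, $n\mapsto m(n)$ is strictly increasing.

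With this in hand the order computation is routine. Combining the three inequalities above gives
\[
o(B_n)\ \leq\ o(G_n)\ =\ o(f(G_n))\ \leq\ o(B'_{m(n)})
\]
for every $n$, the middle equality holding because $f$ is an isomorphism onto $f(G_n)$. Now I would transfer the relation $\lhd$ across $f$: fix $\alpha$ with $B\lhd\alpha$ and $\beta<\alpha$, and choose a strictly increasing $s\in\omega^\omega$ with $o(B_{s(i)})\geq\beta$ for all $i$. Then $i\mapsto m(s(i))$ is strictly increasing and $o(B'_{m(s(i))})\geq\beta$ for all $i$, which is exactly a witness that $B'\lhd\alpha$. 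Hence $\{\alpha:B\lhd\alpha\}\subseteq\{\alpha:B'\lhd\alpha\}$, and taking suprema shows the order of $B$ is at most the order of $B'=f(B)$.

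Finally, the statement for the whole trees follows. By Lemma~\ref{lem:BB} the order $o(T)$ is attained by some ray $B\in\BB(T)$; since an infinite ray maps under $f$ to an infinite ray $f(B)$ of $S$, the ray inequality gives $o(T)=\text{order of }B\leq\text{order of }f(B)\leq o(S)$. Equivalently, $o(T)\leq o(S)$ is immediate from $T\leq^{\sharp}S$ together with the monotonicity of $o$ noted at the outset.
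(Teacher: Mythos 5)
Your argument is correct; note that the paper states this lemma without proof, so there is no official argument to compare against. The route you take --- transporting the ancestor relation along $f$ via uniqueness of paths in trees to see that $f(G_n)$ lands inside a subtree $B'_{m(n)}$ hanging off $f(B)$, chaining $o(B_n)\leq o(G_n)=o(f(G_n))\leq o(B'_{m(n)})$, and then transferring the relation $\lhd$ along the strictly increasing reindexing $n\mapsto m(n)$ (with the tree-level inequality also immediate from monotonicity of $o$ under $\leq^{\sharp}$) --- is precisely the argument the paper's machinery around $\lhd$ and Lemma~\ref{lem:BB} is designed to support, and it correctly isolates the one nontrivial point, namely that images of off-ray subtrees remain off-ray in $S$.
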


If $T\leq^{\sharp} S$, and if $G$ is the subdivision and $f$ is the embedding of $G$ into $S$ witnessing this, we will suppress explicit reference to $G$ and refer only to the embedding $f$ as restricted to $T$ and call it the embedding of $T$ into $S$ witnessing $T\leq^{\sharp} S$. 

\begin{corollary} Suppose that $T\equiv^\sharp S$. Then $o(T)=o(S)$ and both have the same number of rays of maximal order. Moreover if $f$ is an embedding witnessing $T\leq^\sharp S$ or $S\leq^\sharp T$,  then $f$ maps a ray of maximal order of one to a ray of maximal order of the other. 
\end{corollary}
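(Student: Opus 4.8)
The plan is to read everything off from the order-monotonicity lemma immediately above, together with the finiteness of $\BB(\cdot)$ supplied by Lemma~\ref{lem:BB}. The equality $o(T)=o(S)$ is essentially free: unravelling $T\equiv^\sharp S$ as $T\leq^\sharp S$ and $S\leq^\sharp T$, I would apply the monotonicity lemma in both directions to get $o(T)\leq o(S)$ and $o(S)\leq o(T)$ simultaneously, hence $o(T)=o(S)=:\alpha$.

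For the ``moreover'' clause I would fix an embedding $f$ witnessing $T\leq^\sharp S$ and let $B$ be any ray of $T$ of maximal order, so that $B\in\BB(T)$ and the order of $B$ equals $\alpha$. The image ray $f(B)$ induced in $S$ satisfies $\alpha = o(B)\leq o(f(B))$ by the monotonicity lemma. On the other hand, by the characterization of $o(S)$ in Lemma~\ref{lem:BB} no ray of $S$ can have order exceeding $o(S)=\alpha$, so $o(f(B))=\alpha$ and $f(B)\in\BB(S)$. Thus $f$ sends maximal-order rays to maximal-order rays, and the identical argument applies verbatim to any embedding witnessing $S\leq^\sharp T$.

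It remains to count. Using that order is independent of the chosen root and of passing to a tail (the root-independence lemma above), the order of a ray depends only on its end (its tail-equivalence class), so $\BB(T)$ and $\BB(S)$ may be regarded as the sets of ends of maximal order, which are finite by Lemma~\ref{lem:BB}. By the previous paragraph, $f$ induces a map $\bar f\colon\BB(T)\to\BB(S)$ sending the end of $B$ to the end of $f(B)$; this is well defined because tail-equivalent rays have tail-equivalent images. The step carrying the only genuine content is that $\bar f$ is injective: since $f$ is an embedding of a subdivision it is injective on vertices, so two rays representing distinct ends of $T$ are eventually disjoint and therefore have eventually disjoint images, which represent distinct ends of $S$. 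This yields $|\BB(T)|\leq|\BB(S)|$, and symmetrically, running the argument with an embedding witnessing $S\leq^\sharp T$, $|\BB(S)|\leq|\BB(T)|$; as both are finite they are equal. I expect this injectivity of $\bar f$ on ends to be the main (indeed only) obstacle beyond routine bookkeeping, since it is exactly where vertex-injectivity of a subdivision embedding is needed to prevent two distinct ends from collapsing into one.
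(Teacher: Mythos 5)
Your proof is correct and follows exactly the route the paper intends: the paper states this corollary without proof as an immediate consequence of the two preceding lemmas (root-independence of ray order and monotonicity of order under embeddings), and your argument supplies the details, rightly identifying the injectivity of the induced map on ends as the only point requiring actual verification. Nothing to add.
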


Before proceeding, let us first describe those trees $T$ with exactly one isomorphism class that is mutually topologically minor to $T$. First note that forfinite trees $T$ and $S$ $T\equiv^\sharp S$ if and only if $T$ is isomorphic to $S$. Indeed, one can first note that if $T\equiv^\sharp S$ then $T$ and $S$ have the same number of nodes, and then proceed by induction on the number of nodes. But there are infinite trees with exactly one isomorphism class. Recall, $T$ has order 1 if and only if $T$ is infinite and consists of a finite initial tree $G$ with finitely many infinite paths connected to some nodes of $G$.

\begin{theorem}\label{1class} Suppose $T$ is a tree and $o(T)\leq 1$, then $T\equiv^\sharp S$ if and only if $T$ is isomorphic to $S$. 
\end{theorem}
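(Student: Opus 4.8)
The plan is to prove the two implications separately. The direction $T\cong S \Rightarrow T\equiv^\sharp S$ is immediate, since an isomorphism is in particular a pair of mutually inverse topological embeddings. For the substantive direction I would keep the order-data under control using the Corollary just proved: if $T\equiv^\sharp S$ then $o(S)=o(T)$, both trees have the same number of rays of maximal order, and every witnessing embedding carries a maximal-order ray to a maximal-order ray. Since the hypothesis is $o(T)\le 1$, there is no genuine transfinite induction to run; it suffices to treat $o(T)=0$ and $o(T)=1$ separately.

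For the base case $o(T)=0$, the tree $T$ is finite, and by the Corollary $o(S)=0$ as well, so $S$ is finite too. Then the observation recorded just before the theorem—that finite trees with $T\equiv^\sharp S$ must have the same number of nodes, after which one induces on the number of nodes—yields $T\cong S$ directly.

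The main case is $o(T)=1$. By the recalled structure of order-one trees, $T$ is obtained from a finite core together with finitely many rays (ends), off which hang finitely many, but possibly arbitrarily large, finite bushes; the same holds for $S$, and by the Corollary $T$ and $S$ share the same number $k$ of maximal-order ends. Fix mutual witnessing embeddings $f\colon T\to S$ and $g\colon S\to T$. Because an end must be sent into an end of the same order, and there are exactly $k$ of these on each side, $f$ (respectively $g$) induces a bijection between the ends of $T$ and those of $S$; restricted to a single end, $f$ carries the sequence of finite bushes of $T$ increasingly into the sequence of finite bushes of $S$. This is exactly the $\leq^*$ configuration analysed in Section~\ref{sec:countable}, so from $T\equiv^\sharp S$ one reads off, end by end, a pair of mutual $\leq^*$-dominations between the bush-sequences. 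The goal is then to upgrade these $\leq^*$-dominations to an honest isomorphism matching the core, the $k$ ends, and all the bushes.

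The heart of the matter, and what I expect to be the main obstacle, is \emph{rigidity}: I must rule out that $f$ and $g$ are proper, i.e.\ force them to be surjective and hence isomorphisms. The natural approach is to study the self-embedding $g\circ f\colon T\to T$ and argue, using local finiteness together with the finiteness of both the core and the number of ends, that no vertex can be permanently omitted—any omission would have to persist along some end and accumulate a deficit in the finite bushes there that the reverse embedding could not recover, contradicting that $g\circ f$ and $f\circ g$ both witness the same equivalence. The delicate point is precisely to show that the bush-data along each end is pinned down not merely up to $\leq^*$-equivalence but up to the exact correspondence an isomorphism demands; combining the finite base case with this end-by-end bookkeeping is where the real work, and any hidden difficulty, will lie.
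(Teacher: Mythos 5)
Your proof is not complete, and the point you defer is exactly where the statement breaks down. The finite case and the use of the preceding Corollary to match ends of maximal order are fine, and they agree with the only justification the paper itself offers (the paper states this theorem without proof, resting on the remark that an order-$1$ tree ``consists of a finite initial tree $G$ with finitely many infinite paths connected to some nodes of $G$''). But your description of order-$1$ trees is the honest one: a locally finite infinite tree has order $1$ exactly when it has finitely many ends, and such a tree may carry infinitely many nontrivial finite bushes along each ray. You correctly observe that mutual embeddability only pins the bush-sequence along an end down up to the $\leq^*$-equivalence of Section 2, and you flag the upgrade from $\leq^*$-equivalence to exact correspondence as ``where the real work will lie'' --- but you never carry it out, so the proposal has a genuine gap rather than a proof.

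Moreover, that gap cannot be closed, because under your (correct) reading the theorem is false. Let $C$ be the caterpillar: a ray $v_0v_1v_2\ldots$ with one pendant leaf at every $v_i$; let $C'$ be the ray with one pendant leaf at every $v_{2i}$ only. Both are infinite with a single end and contain no two disjoint rays, so no subdivision of $S_2$ embeds in either and $o(C)=o(C')=1$. Clearly $C'$ is a subgraph of $C$, so $C'\leq^\sharp C$; and subdividing each spine edge of $C$ once produces a tree isomorphic to $C'$, so $C\leq^\sharp C'$. Hence $C\equiv^\sharp C'$, yet $C\not\cong C'$ ($C$ has adjacent vertices of degree $3$ and $C'$ does not). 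So the bush-data really is determined only up to $\leq^*$, not up to isomorphism, and the ``accumulated deficit'' you hope to exploit never materializes: the reverse embedding recovers everything at the level of topological minors while losing the positional information an isomorphism needs. The statement is only salvageable if one restricts, as the paper implicitly does, to trees that are literally a finite tree with finitely many bare rays attached (no bushes along the rays) --- a strictly smaller class than $\{T: o(T)\leq 1\}$ --- and in that restricted setting the result is essentially immediate and none of your $\leq^*$ machinery is needed.
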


And the next theorem completes the proof of Theorem \ref{main}.

\begin{theorem} Suppose $T$ is a such that $o(T)=\alpha>1$. Then there are continuum many trees all topologically equivalent to $T$ but pairwise non-isomorphic. 
\end{theorem}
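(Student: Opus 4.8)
The plan is to exploit the profile calculus developed in the first half of the paper. Fix a maximal-order ray $B\in\BB(T)$, which exists and is one of only finitely many such rays by Lemma~\ref{lem:BB} (together with the corollary that topologically equivalent trees have the same number of maximal-order rays). Rooting $T$ at the initial node of $B$, write $t_0<t_1<\cdots$ for the nodes of $B$ and let $R_n$ be the subtree hanging off $B$ at $t_n$. As in the proof of Theorem~\ref{thm:countablecase}, the topological type of $T$ \emph{relative to the fixed remaining structure} (the finitely many other maximal rays together with the finite core) is captured by the profile $f_T\in P^\omega$ given by $f_T(n)=R_n$, where $P$ is the wqo of hanging types; and for trees sharing this frame one has $S\leq^\sharp T$ iff $f_S\leq^* f_T$, so that $\equiv^\sharp$ corresponds exactly to the equivalence $\equiv$ on $P^\omega$. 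It therefore suffices to produce continuum many profiles $\equiv$-equivalent to $f_T$ whose associated trees are pairwise non-isomorphic.

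Next I extract a toggling skeleton. Since $o(T)=\alpha>1$, Lemma~\ref{lem:fitcombs} and Lemma~\ref{lem:BB} give $B\lhd\alpha$, so the orders $o(R_n)$ are cofinal in $\alpha$ when $\alpha$ is a limit, and assume the top value $\delta$ infinitely often when $\alpha=\delta+1$ with $\delta\geq1$. Passing first to a subsequence along which the orders strictly increase (a ``record'' subsequence) and then, using that the locally finite trees form a wqo, to a further $\leq^\sharp$-increasing subsequence, I obtain hangers $R_{m_0}\leq^\sharp R_{m_1}\leq^\sharp\cdots$ whose orders still witness $B\lhd\alpha$. Split the indices into a \emph{permanent} set $\{m_{2i+1}:i\in\omega\}$ and a \emph{toggle} set $\{m_{2i}:i\in\omega\}$. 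For each $x\in 2^\omega$ let $T^x$ be the tree obtained from $T$ by deleting the hanger $R_{m_{2i}}$ precisely when $x_i=0$, leaving all other structure untouched.

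I then check that $T^x\equiv^\sharp T$ for every $x$. Since $T^x\subseteq T$ as a subgraph we get $f_{T^x}\leq^* f_T$ at once. For the reverse, observe that the permanent family $R_{m_{2i+1}}$ survives in $f_{T^x}$ for every $x$; it is $\leq^\sharp$-increasing with orders cofinal in $\alpha$, so every entry of $f_T$ -- in particular each deleted $R_{m_{2i}}$, which satisfies $R_{m_{2i}}\leq^\sharp R_{m_{2i+1}}$ -- can be matched, via a strictly increasing index sequence, into a surviving entry of $f_{T^x}$. Hence $f_T\leq^* f_{T^x}$, so $f_{T^x}\equiv f_T$ and therefore $T^x\equiv^\sharp T$. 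For non-isomorphism, note that the presence or absence of a hanger at each toggle slot is recovered from $T^x$ once $B$ and the permanent markers are located; because $B$ is one-ended and essentially unique among the maximal-order rays, any automorphism of $T^x$ fixes $B$ and its markers and so preserves the toggle pattern up to the action of the at most countable automorphism group of the fixed frame. Thus $x\mapsto T^x$ is at most countable-to-one up to isomorphism, and the $2^\omega$ sequences yield $\mathfrak c$ pairwise non-isomorphic trees, all topologically equivalent to $T$; as the total number of countable locally finite trees is $\mathfrak c$, this count is exactly $\mathfrak c$. Together with Theorem~\ref{1class} this completes Theorem~\ref{main}.

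The main obstacle I expect is justifying the profile correspondence in the present \emph{unrooted} setting with several maximal rays: one must relativize the argument of Theorem~\ref{thm:countablecase} so that embeddings may be taken to respect the fixed frame, and one must guarantee that the extracted $\leq^\sharp$-increasing subsequence of hangers remains cofinal in $\alpha$ (so that deleting the toggle hangers never lowers the order). Both points rest on the wqo property of $\leq^\sharp$ and on order-monotonicity of embeddings; the limit and successor cases for $\alpha$ are handled uniformly by the record-order-then-wqo extraction described above.
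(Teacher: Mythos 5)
Your construction takes a genuinely different route from the paper's. You toggle (delete) selected hangers along a maximal-order ray; the paper instead first \emph{collapses} $T$ along its maximal-order rays (contracting the simple paths through nodes with empty hangers) and then produces continuum many \emph{subdivisions} $S_f$ of that collapse, distinguishing them by the range of the subdivision-length function $f$ modulo finite sets. The paper's route never removes a nonempty hanger, and that is precisely what makes its verification of mutual topological equivalence go through.

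The gap in your argument is the claim $f_T\leq^* f_{T^x}$. The witnessing sequence $(k_n)$ must be strictly increasing, so at the first deleted position the index is forced to jump forward (since $R_{m_{2i}}\not\leq^{\sharp}\emptyset$), and from then on \emph{every} hanger of $T$ --- not only the deleted ones --- must be matched into a strictly later surviving hanger of $T^x$. Your extraction only guarantees $R_{m_{2i}}\leq^{\sharp} R_{m_{2i+1}}$ for the selected subsequence and says nothing about hangers at positions outside $\{m_j\}$. Concretely: let $\alpha=\delta+1$ with $\delta\geq 1$, put a copy of $S_\delta$ (which has maximum degree $3$) at positions $0,3,6,\dots$ of $B$, a copy of $K_{1,4}$ at position $1$, and nothing elsewhere; then $K_{1,4}\not\leq^{\sharp} S_\delta$, your toggle set lies inside the $S_\delta$-positions, and for any $x$ deleting position $0$ every matching must send position $0$ of $f_T$ to some $k_0\geq 3$, whence $k_1>3$, but no surviving hanger beyond position $3$ contains $K_{1,4}$. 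So $T\not\leq^{\sharp} T^x$ and the construction breaks. The paper forecloses exactly this by first truncating each ray past the finitely many indices $i$ for which no $m>i$ satisfies $T^k(i)\leq^{\sharp} T^k(m)$ (finitely many by the wqo property), and then by varying only subdivision lengths, so that every nonempty hanger retains cofinally many surviving targets. Your scheme could be repaired by adding that truncation and choosing the toggle set so that every hanger still embeds into cofinally many surviving ones, but as written $T^x\equiv^{\sharp} T$ can fail. Your non-isomorphism count is essentially sound, though the ``countable-to-one up to a shift along finitely many rays'' step deserves the explicit treatment the paper gives its own invariant (the range of $f$ up to almost-equality).
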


\begin{proof} Given a graph $G$, let us say that a sequence of nodes $x_0,x_1,...x_n$ is a simple path if each $x_i$ is connected to $x_{i+1}$ by an edge and for every $0<i<n$ the degree of the vertex $x_i$ is the minimal possible, i.e., $d(x_i)=2$. 

Given a graph $G$, by a {\bf collapse of $G$}, we mean a graph formed by fixing a collection of simple paths in $G$ and replacing each simple path $(x_0,x_1,....x_n)$ in that collection by a single edge connecting $x_0$ to $x_1$ (so deleting all the nodes $x_1,...x_{n-1}$ from $G$ in the process). Clearly $G$ is a subdivision of any of its collapses so any collapse of $G$ is a topological minor of $G$.  

Now fix $T$ of order $\alpha>0$ and let $\{B_k:k<n\}$ enumerate all infinite rays of order $\alpha$ in $T$.  Let us enumerate the nodes of each $B_k$ as $\{b_k(i):i<\omega\}$ as described above. For each $k$ and each $i>0$ let $T^k(i)$ denote the induced subgraph of $T$ that contains $b_k(i)$ and all nodes that can be connected to $b_k(i)$ by a path disjoint from the the ray $B_k$. I.e., $T^k(i)$ is the subtree attached to the node $b_k(i)$ off of the ray $B_k$.  By  truncating the rays' initial segments we may assure a few things:
\begin{enumerate}
\item $B_k$ is disjoint from $B_j$ for all $k\not=j <n$
\item For all $k$ and for all $i$, there is $m>i$ such that $T^k(i)\leq^{\sharp} T^k(m)$.
\end{enumerate}
We now form a collapse of $T$ by taking all maximal simple paths contained in the rays $B_k$. I.e., we delete all nodes of the rays $B_k$ for which $T_k(i)$ is empty. 
We denote this collapse of $T$ by $S$. Since $T$ is a subdivision of $S$ we have $S\leq^{\sharp} T$. 
\begin{lemma} $T\leq^{\sharp} S$
\end{lemma}

\begin{proof} We need to embed a subdivision of $T$ into $S$. First fix a maximal subtree of $T$ that contains the set of initial nodes $I=\{b_k(0):k<n\}$. from all the rays $B_k$ for $k<n$. Let this be the subtree induced by the set of all nodes that can be connected to some node in the set $I$ via a path disjoint from $\bigcup_k B_k$. On this subtree let we let the embedding be the identity. For the rest of the tree, we work with each ray $B_k$ separately. Fix $k<n$ and fix an sequence $\{s_k(i):i<\omega\}$ so that for all $i$ we have $T^k(i)\leq^{\sharp} T^k(s_k(i))\not=\emptyset$ (note that since $T^k(s_k(i))\not=\emptyset$ it follows that this subtree and the node $b_k(s_k(i))$ is in $S$). We may not fix an embedding of a subdivision of $T^k(i)$ into $T^k(s_k(i))$ and by adding sufficiently many nodes between $b_k(i)$ and $b_k(i+1)$ for each $i$, extend this to an embedding of a subdivision of the entire subtree living on $B_k$ into the subtree of $S$ living on the same ray. Since the rays are pairwise disjoint, we can join all these together into a single subdivision of tree and embedding of it into $S$, witnessing $T\leq^{\sharp} S$. 

\end{proof}


For a function $f:\omega\rightarrow \omega$ let us define $S_f$ to be the subdivision of $S$ obtained by replacing the edge connecting each successive elements $b_k(i)$ and $b_k(i+1)$ by a path of length $f(i)$. Since the rays are disjoint, this subdivision is well defined. 

\begin{lemma} Suppose that $f, g\in \omega^\omega$ and suppose that $S_f$ is isomorphic to $S_g$. Then the range of $f$ is, modulo a finite set, equal to the range of $g$. I.e. $ran(f)\setminus ran(g)$ and $ran(g)\setminus ran(f)$ are both finite. 
\end{lemma} 
\begin{proof} If two trees are isomorphic, they must be of the same order and they must map infinite rays to infinite rays preserving the order of the rays. Therefore, there are two rays $B_i$ and $B_j$ with $i$ and $j$ less than $n$ such that modulo some finite initial segment of each ray, the $f$-subdivision of $B_i$ must be mapped isomorphically onto the $g$-subdivision of $B_j$. Nodes of degree 2 must be preserved in this map so the lengths of the subdivisions between the successive nodes of $B_i$ must coincide with the lengths of the subdivisions between successive nodes of $B_j$ except for at most a finite initial segment of each ray. Therefore the range of $f$ is almost equal the range of $g$.  
 
\end{proof}

Now to complete the proof of the Theorem, it suffices to fix continuum many functions which pairwise have distinct ranges modulo $=^*$. 
\end{proof}

Just as with Section~\ref{sec:uncountable}, trees with uncountably many rays pose a largely more complicated problem. However, it is not difficult to show that employing the methods developed above, the following partial result hold. 

\begin{theorem} Let $T$ be any locally finite tree for which $o(T^\star_\alpha)<\omega_1$ for all $\alpha \in \omega+1$. Then $|[T]|$ is either $1$ or ${\mathfrak c}$.
\end{theorem}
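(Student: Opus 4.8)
The plan is to rerun the dichotomy that established Theorem~\ref{main}, replacing the global assumption ``countably many rays'' by the local control the hypothesis $o(T^\star_\alpha)<\omega_1$ provides. First I would dispose of the rigid case: if $o(T)\leq 1$ then $T$ has only finitely many rays and Theorem~\ref{1class} already gives $|[T]|=1$. So I may assume $T$ has order $>1$, and in particular, should $T$ have uncountably many rays, then by Lemma~\ref{lem:fullbinary} the full binary tree embeds into $T$, so $T$ automatically carries nontrivial comb structure. In every remaining case the goal is to manufacture continuum many pairwise non-isomorphic members of $[T]$, so that $|[T]|=\mathfrak{c}$.

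The engine is the collapse-and-stretch construction from the proof of the preceding theorem. I would isolate a ray $B$ of $T$, with nodes $b(i)$, carrying infinitely many pairwise disjoint finite subtrees $T(i)$ hanging off the $b(i)$, such that after truncating an initial segment the sequence $(T(i))$ is cofinal under $\leq^{\sharp}$. As there, the collapse $S$ of $T$ obtained by deleting the degree-$2$ nodes of $B$ with empty hanging subtree satisfies $T\equiv^{\sharp} S$, and for each $f\in\omega^\omega$ the subdivision $S_f$ that replaces the edge between $b(i)$ and $b(i+1)$ by a path of length $f(i)$ is again topologically equivalent to $T$. Choosing continuum many $f$ whose ranges are pairwise distinct modulo finite, the degree-$2$-counting argument of the ranges-mod-finite lemma then yields continuum many pairwise non-isomorphic trees in $[T]$.

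The role of the hypothesis is to supply such a ray $B$ and, more importantly, to keep the non-isomorphism argument intact once $T$ has uncountably many rays. The difficulty is that for such $T$ the order is $\omega_1$ and there is no well-defined finite family of ``rays of maximal order'' to anchor the argument, so the naive step used in the countable case fails outright. Here I would use that each comb subtree $T^\star_\alpha$ has, by assumption, countable order: fixing $\alpha\leq\omega$ with $T^\star_\alpha$ infinite, the bound $o(T^\star_\alpha)<\omega_1$ forces $T^\star_\alpha$ to have countably many rays (Lemma~\ref{lem:fullbinary}), whence by Lemma~\ref{lem:BB} the maximal-order rays $\BB(T^\star_\alpha)$ form a non-empty finite set. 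I would take $B$ from among these, so that $B$ is singled out by the isomorphism-invariant property of carrying a hairy $\alpha$-comb (or $\beta$-comb) and lying in $\BB(T^\star_\alpha)$, and I would stretch all of these finitely many distinguished rays simultaneously by the same $f$. Since an isomorphism $S_f\cong S_g$ preserves order, maps infinite rays to infinite rays preserving their order, and hence matches $\BB((S_f)^\star_\alpha)$ with $\BB((S_g)^\star_\alpha)$ — two finite sets — it must send one of the finitely many stretched rays of $S_f$ onto such a ray of $S_g$; preservation of degree-$2$ nodes then forces the interval lengths to agree up to a finite initial segment, whence $\mathrm{ran}(f)=^{*}\mathrm{ran}(g)$ exactly as in the ranges-mod-finite lemma.

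Finally I would conclude by fixing continuum many $f\in\omega^\omega$ with pairwise $=^{*}$-distinct ranges (for instance indexed by an almost disjoint family of infinite subsets of $\omega$), producing continuum many pairwise non-isomorphic elements of $[T]$. The main obstacle is the third paragraph: verifying that stretching a ray inside a tree with uncountably many branches cannot be disguised up to isomorphism by rerouting through the uncountable branching. The comb-order hypothesis is precisely what confines all of the flexible, high-branching structure to countably many rays with only finitely many of maximal starred order, so the technical heart is to show that a topological equivalence together with this confinement forces any isomorphism to respect the stretched rays up to a finite adjustment, which is what makes $\mathrm{ran}(f)$ modulo finite a genuine continuum-valued invariant.
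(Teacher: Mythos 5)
Your proposal is correct and takes essentially the same route as the paper: the paper's own proof is precisely the one-line instruction to rerun the collapse-and-stretch construction of the preceding theorem with the branches of $T^\star_\alpha$, for $\alpha=\sup\{n\in\omega+1 : o(T^\star_n)>0\}$, playing the role of the finitely many maximal-order rays of $T$. Your write-up merely supplies the verification the paper leaves implicit, namely that $o(T^\star_\alpha)<\omega_1$ forces $T^\star_\alpha$ to have countably many rays and hence a finite, isomorphism-invariant set $\BB(T^\star_\alpha)$ anchoring the range-mod-finite invariant.
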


\begin{proof} The idea here it to replace, in the above results, the branches of a tree $T$ with the branches of $T_\alpha^\star$ where $\alpha = \text{sup}\{n\in \omega+1 \mid o(T_n^\star) > 0\}$.
\end{proof}

\end{document}